\newtheorem{theorem}{Theorem}[section]
\newtheorem{lemma}[theorem]{Lemma}
\theoremstyle{definition}
\newtheorem{conjecture}[theorem]{Conjecture}
\newtheorem{remark}[theorem]{Remark}
\numberwithin{equation}{section}
\begin{document}
\title[An explicit evaluation of $\nth{10}$-power moment of quadratic Gauss sums]
{An explicit evaluation of $\nth{10}$-power moment of \\ quadratic Gauss sums and some applications}

%    Only \author and \address are required; other information is
%    optional.  Remove any unused author tags.

%    author one information
 %\author[short version for running head]{}
\author{Nilanjan Bag}
 \address{Department of Mathematics, Harish-Chandra Research Institute, HBNI, Chhatnag Road, Jhunsi, Prayagraj (Allahabad) - 211 019, India}
% \curraddr{}
\email{nilanjanbag@hri.res.in}
\author{Antonio Rojas-León}
 \address{Departament of Algebra, Universidad de Sevilla, c/Tarfia, s/n, 41012 Sevilla, Spain}
% \curraddr{}
\email{arojas@us.es}
% \thanks{}

%    author two information
\author{Zhang Wenpeng}
 \address{School of Mathematics, Northwest University, Xi'an, 710127, Shaanxi, P. R. China}
\email{wpzhang@nwu.edu.cn}
%    \subjclass is required.
\subjclass[2010]{11L05, 11L07.}
\date{16th April, 2021}
\keywords{generalized quadratic Gauss sums; Legendre symbol; asymptotic formula.}
%\thanks{Acknowledgement: }
%    Abstract is required.
\begin{abstract}
In this paper we have estimated one multi-variable character sum
\begin{align*}
\sum_{a=2}^{p-2}\sum_{b=1}^{p-1}\sum_{c=2}^{p-2}\sum_{d=1}^{p-1}\left(\frac{a^2-b^2}{p}\right)\left(\frac{b^2-1}{p}\right)
  \left(\frac{c^2-d^2}{p}\right)\left(\frac{d^2-1}{p}\right)\left(\frac{a^2c^2-1}{p}\right),
\end{align*}
for odd prime $p$. With the help of our estimate of the above character sum, we have studied the tenth power mean value of generalized quadratic Gauss sums using estimates for character sums and analytic methods.
\end{abstract}
\maketitle
\section{Introduction and statements of the results}
Let $q\geq 2$ be an integer, and let $\chi$ be a Dirichlet character modulo $q$. For $n\in \mathbb{Z}$, we define the generalized quadratic Gauss sum $G(n,\chi;q)$ as
\begin{equation}
 G(n,\chi;q)=\sum_{a=1}^q\chi(a)e\left(\frac{na^2}{q}\right),\notag
\end{equation}
where $e(y)=e^{2\pi iy}$. This sum generalizes the classical quadratic Gauss sum $G(n;q)$, which is defined as
\begin{equation}
 G(n;q)=\sum_{a=1}^{q}e\left(\frac{na^2}{q}\right).\notag
\end{equation}
This kind of character sum has been studied for a long time. The values of $G(n,\chi;q)$ behave irregularly whenever $\chi$ varies. For a positive integer $n$ with $\gcd(n,q)=1$, one can find a non-trivial upper bound of $|G(n,\chi;q)|$. For such results see the work of Cochrane and Zheng \cite{cochrane}. In case of prime $p$, finding such bounds is due to Weil \cite{weil}. Let $p$ be an odd prime and $L(s,\chi)$ denote the Dirichlet $L$-function corresponding to the character $\chi \bmod p$. Let $\chi_0$ denote the principal character modulo $p$.
\par For a general integer $m\geq 3$, whether there exists an asymptotic formula for
 \begin{align*}
 \sum_{\chi\bmod p}|G(n,\chi;p)|^{2m} \text{ and }  \sum_{\chi \neq \chi_0}|G(n,\chi;p)|^{2m}|L(1,\chi)|
 \end{align*}
 is an unsolved problem. In \cite{zhang}, the third author conjectured the following.
\begin{conjecture}\label{C1}
For all positive integer $m$,
\begin{align*}
\sum_{\chi\neq\chi_0}|G(n,\chi;p)|^{2m}\cdot |L(1,\chi)|\sim C\sum_{\chi \bmod p}|G(n,\chi;p)|^{2m}, \qquad p\rightarrow+\infty,
\end{align*}
where
\begin{align}\label{constant-c}
C=\prod_p\left[1+\frac{\binom{2}{1}^2}{4^2.p^2}+\frac{\binom{4}{2}^2}{4^4.p^4}+\cdots+\frac{\binom{2m}{m}^2}{4^{2m}.p^{2m}}+\cdots\right]
\end{align}
is a constant and $\displaystyle \prod_p$ denotes the product over all primes.
\end{conjecture}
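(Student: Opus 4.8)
The plan is to read the constant $C$ probabilistically and then reduce the conjecture to an off-diagonal estimate. Starting from the Euler product $L(1,\chi)=\prod_q(1-\chi(q)/q)^{-1}$, I would use $|z|^{-1}=(z\bar z)^{-1/2}$ together with the binomial expansion $(1-x)^{-1/2}=\sum_{k\ge 0}\binom{2k}{k}x^k/4^k$ to expand (first for $\mathrm{Re}(s)>1$ and then by passing to $s=1$)
\[
|L(1,\chi)|=\sum_{n_1,n_2\ge 1}\frac{c(n_1)\,c(n_2)}{n_1 n_2}\,\chi(n_1)\,\overline{\chi(n_2)},
\]
where $c$ is the multiplicative function with $c(q^k)=\binom{2k}{k}/4^k$. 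The crucial observation is that the diagonal factor is
\[
\sum_{n\ge 1}\frac{c(n)^2}{n^2}=\prod_q\sum_{k\ge 0}\frac{\binom{2k}{k}^2}{4^{2k}q^{2k}},
\]
which is precisely the constant $C$ of \eqref{constant-c}. Thus $C$ is nothing but the mean value of $|L(1,\chi)|$ that one obtains by modelling the $\chi(q)$ as independent and uniform on the unit circle, and the conjecture is really the assertion that $|G(n,\chi;p)|^{2m}$ and $|L(1,\chi)|$ are asymptotically uncorrelated as $\chi$ varies.

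Next I would open the Gauss-sum moment by orthogonality. Since $\chi(a)=0$ for $p\mid a$,
\[
|G(n,\chi;p)|^{2m}=\sum_{\mathbf a,\mathbf b}\chi(a_1\cdots a_m)\,\overline{\chi(b_1\cdots b_m)}\,e\!\left(\frac{n\big(\sum_i a_i^2-\sum_j b_j^2\big)}{p}\right),
\]
with $\mathbf a=(a_1,\dots,a_m)$, $\mathbf b=(b_1,\dots,b_m)$ ranging over $(\mathbb F_p^\times)^m$. Multiplying by $\chi(n_1)\overline{\chi(n_2)}$ and summing over all $\chi\bmod p$ collapses to $(p-1)$ times the indicator of $n_1 a_1\cdots a_m\equiv n_2 b_1\cdots b_m\pmod p$ (the principal-character discrepancy between the two sides of the conjecture is explicitly removable and lower order). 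Both quantities therefore become weighted counts of such multiplicative congruences twisted by the additive factor $e(n(\sum a_i^2-\sum b_j^2)/p)$: the right-hand side of the conjecture is exactly the diagonal contribution $n_1=n_2$ (where $n_1 a_1\cdots a_m\equiv n_1 b_1\cdots b_m$ reduces to $a_1\cdots a_m\equiv b_1\cdots b_m$ and the factor $\sum_n c(n)^2/n^2=C$ pulls out cleanly), while the left-hand side is the full double sum over $(n_1,n_2)$.

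The conjecture is hence equivalent to showing that the off-diagonal part $n_1\neq n_2$ is of strictly smaller order than this diagonal. To estimate those terms I would first resolve the additive sums $\sum_a e(na^2/p)$ by means of the classical quadratic Gauss sum, which converts the exponential weights into products of Legendre symbols $\left(\tfrac{\cdot}{p}\right)$ and turns the count into a multi-variable character sum; for $m=5$ this is precisely the five-fold sum displayed in the abstract, and a square-root-cancellation bound for it supplies the required saving over the diagonal. The main obstacle is twofold. First, the expansion of $|L(1,\chi)|$ converges only conditionally (already $\sum_q c(q)/q$ diverges), so I would make it rigorous by truncating through an approximate functional equation and bounding the tails with Pólya–Vinogradov. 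Second, and more seriously, one needs genuine power-saving for the resulting multi-variable Legendre-symbol sums; this is the real difficulty, since for general $m$ these sums form an increasingly intricate family whose cancellation is hard to control uniformly. This is exactly why the statement remains conjectural for all $m$, and why the present paper must evaluate the $m=5$ sum explicitly in order to settle the tenth-power case.
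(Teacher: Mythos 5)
This statement is Conjecture~1.1: the paper does \emph{not} prove it, and explicitly says it is open for $m\geq 6$, the cases $m\leq 5$ being settled only by combining the new Theorems~1.4 and~1.5 with the earlier work of Zhang, He--Liao and Bag--Barman. So there is no in-paper proof to compare you against, and any purported complete proof would have to be wrong. To your credit, your write-up recognizes this: what you give is a heuristic explanation of the constant plus a reduction to an off-diagonal estimate, and you state plainly at the end that the required uniform estimate is out of reach. That self-assessment is accurate, and it is the correct verdict on the statement itself.

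Judged as a reduction, your outline is sound and consistent with how the proven cases actually go. Writing $|L(1,\chi)|=\left(L(1,\chi)L(1,\overline{\chi})\right)^{1/2}$ and expanding via $(1-x)^{-1/2}=\sum_{k\geq 0}\binom{2k}{k}x^k/4^k$ does yield multiplicative coefficients $c(q^k)=\binom{2k}{k}/4^k$, and the diagonal $\sum_n c(n)^2/n^2$ is exactly the Euler product \eqref{constant-c}; this is precisely the mechanism behind the bound $\sum_{a=1}^{p-1}\bigl|\sum_{\chi\neq\chi_0}\chi(a)\cdot|L(1,\chi)|\bigr|\ll p\ln p$ that the paper invokes after Theorem~1.7 to deduce Theorems~1.5 and~1.7 from the unweighted moments. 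Two caveats you should make explicit. First, orthogonality collapses to the congruence $n_1\equiv n_2 \pmod p$, not the equality $n_1=n_2$, so after truncating the conditionally convergent series (which you do flag) you must also dispose of the sparse terms with $n_1\neq n_2$, $p\mid n_1-n_2$. Second, asserting the off-diagonal is of strictly smaller order than the diagonal presupposes a lower bound on $\sum_{\chi}|G(n,\chi;p)|^{2m}$; you omit this, but it is easy: $\sum_{\chi\bmod p}|G(n,\chi;p)|^{2}=(p-1)^2$, so the power-mean inequality gives $\sum_{\chi}|G(n,\chi;p)|^{2m}\geq (p-1)^{m+1}$, making the target off-diagonal bound $o(p^{m+1})$. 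Note also that the paper's computations for the proven cases do not expand $|G|^{2m}$ directly as you propose, but go through the identity of Lemma~2.3, $|G(n,\chi;p)|^2=Ap+B\sum_{a=2}^{p-2}\chi(a)\left(\frac{a^2-1}{p}\right)$, raised to the $m$-th power --- an equivalent bookkeeping that organizes the same multi-variable Legendre-symbol sums ($N$, $T$, and the quintic sum $S$ of Theorem~1.2). The genuine gap, which you yourself name, is the uniform power-saving bound for this growing family of sums; that is exactly the open content of the conjecture (compare Conjecture~1.8), and it is why the paper needs the $\ell$-adic sheaf machinery of Theorem~1.2 just to handle the single case $m=5$.
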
 Here $\displaystyle \sum_{\chi \bmod p}$ denotes the sum over all Dirichlet characters modulo $p$ and
 $\displaystyle \sum_{\chi \neq \chi_0}$ denotes the sum over all non-principal Dirichlet characters modulo $p$.
The third author \cite{zhang} showed that $G(n,\chi;p)$ satisfies many good weighted mean value properties. He used estimates for character sums and analytic methods
to study second, fourth and sixth order moments of generalized quadratic Gauss sums. To be specific, he proved that for any
integer $n$ with $\gcd(n,p)=1$
\begin{align*}
\sum_{\chi\bmod p}|G(n,\chi;p)|^4=\begin{cases}
(p-1)[3p^2-6p-1+4\left(\frac{n}{p}\right)\sqrt{p}], &\text{ if } p\equiv 1 \bmod 4;\\
(p-1)(3p^2-6p-1), &\text{ if } p\equiv 3 \bmod 4,
\end{cases}
\end{align*}
and
\begin{align*}
\sum_{\chi\bmod p}|G(n,\chi;p)|^6=(p-1)(10p^3-25p^2-4p-1), \text{ if } p\equiv 3 \bmod 4,
\end{align*}
where $\left(\frac{\bullet}{p}\right)$ is the Legendre symbol.
Later, He and Liao \cite{yuan} evaluated the sum $\displaystyle  \sum_{\chi\bmod p}|G(n,\chi;p)|^6$ when $p\equiv 1\bmod{4}$. They have also obtained the $8$-th
order mean value of generalized quadratic Gauss sums. To be specific, for any integer $n$ with $\gcd(n,p)=1$,
they \cite[Theorem 2 and 3]{yuan} proved that
\begin{align*}
\sum_{\chi\bmod p}|G(n,\chi;p)|^6=
\left\{
\begin{array}{ll}
(p-1)(10p^3-25p^2-16p-1)+(p\sqrt{p}(p-1)N\\
+18p^2\sqrt{p}-12p\sqrt{p}-6\sqrt{p})\left(\frac{n}{p}\right), & \hspace{-1.8cm} \hbox{if $p\equiv 1 \bmod 4$;} \\
(p-1)(10p^3-25p^2-4p-1), & \hspace{-1.8cm} \hbox{if $p\equiv 3 \bmod 4$,}
\end{array}
\right.
\end{align*}
where
\begin{align}\label{X1}
N=\sum_{a=2}^{p-2}\sum_{c=1}^{p-1}\left(\frac{a^2-c^2}{p}\right)\left(\frac{c^2-1}{p}\right)\left(\frac{a^2-1}{p}\right),
\end{align}
and
\begin{align*}
&\sum_{\chi\bmod p}|G(n,\chi;p)|^8\\
&=
\left\{
\begin{array}{ll}
(p-1)(34p^4-99p^3-65p^2-29p-1)\\+(56p^3\sqrt{p}+8p^2\sqrt{p}-56p\sqrt{p}-8\sqrt{p}+8p^2\sqrt{p}(p-1)N)\left(\frac{n}{p}\right)\\+p^2(p-1)T,
& \hspace{-1.8cm} \hbox{if $p\equiv 1 \bmod 4$;} \\
(p-1)(34p^4-99p^3+7p^2-5p-1)+p^2(p-1)T, & \hspace{-1.8cm} \hbox{if $p\equiv 3 \bmod 4$,}
\end{array}
\right.
\end{align*}
where $N$ is the same as \eqref{X1} and
\begin{align*}
T=\sum_{a=2}^{p-2}\sum_{b=1}^{p-1}\sum_{d=1}^{p-1}\left(\frac{a^2-b^2}{p}\right)\left(\frac{b^2-1}{p}\right)\left(\frac{a^2-d^2}{p}\right)\left(\frac{d^2-1}{p}\right).
\end{align*}
In article \cite{BB}, the first author and Barman derived asymptotic formulas for $T$ and $N$ which allow them to get an improved estimate for He and Liao's result. In particular, for odd prime $p$ and for any integer $n$ with $\gcd(n,d)=1$ they
 proved that
\begin{align*}
\sum_{\chi \bmod p} \left|G(n,\chi;p)\right|^6
=\begin{cases}
 10p^4+O(p^{3/2}),&\text{if}~p\equiv 1\bmod 4;\\
 (p-1)(10p^3-25p^2-4p-1),&\text{if}~p\equiv 3\bmod 4,
 \end{cases}
\end{align*}
and
\begin{align*}
\sum_{\chi \bmod p} \left|G(n,\chi;p)\right|^8
=35p^5+O(p^{9/2}).
 \end{align*}
 The estimates for the $6$-th and $8$-th order power mean values along with the results for $6$-th and $8$-th order power moments of generalised quadratic Gauss sums weighted by $L$-functions proved Conjecture \ref{C1} upto $m\leq 4$.
%%%%%%%%%%%%%%%%%%%%%%%%%%%%%%%%%%%%%%%%%%%%%%%%%%%%%
\par In this article, we first estimate a multi-variable character sum. In particular, we prove the following:
\begin{theorem}\label{MT1}
Let $p$ be an odd prime. Then we have

\begin{align*}
\sum_{a=2}^{p-2}\sum_{b=1}^{p-1}\sum_{c=2}^{p-2}\sum_{d=1}^{p-1}\left(\frac{a^2-b^2}{p}\right)\left(\frac{b^2-1}{p}\right)
  \left(\frac{c^2-d^2}{p}\right)\left(\frac{d^2-1}{p}\right)\left(\frac{a^2c^2-1}{p}\right)=O(p^2).
\end{align*}
\end{theorem}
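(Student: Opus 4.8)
The plan is to read the bound $O(p^2)$ as exactly square-root cancellation for a four-variable character sum, which already signals that no elementary argument will reach it: summing the inner $b,d$ by Weil's bound leaves $\sum_{a,c}f(a)f(c)\left(\frac{a^2c^2-1}{p}\right)$ with $f(a)=\sum_{b}\left(\frac{(a^2-b^2)(b^2-1)}{p}\right)=O(\sqrt p)$, and even Cauchy--Schwarz against the known second moment $T=\sum_a f(a)^2=O(p^2)$ only yields $O(p^{5/2})$. I would therefore argue via Deligne's Riemann Hypothesis. Note first that the ranges $2\le a,c\le p-2$ are precisely $a,c\notin\{0,\pm1\}$, and $1\le b,d\le p-1$ is $b,d\neq 0$; replacing the truncated inner sum $\sum_{b=1}^{p-1}$ by the completed $f(a)=\sum_{b\in\mathbb F_p}$ changes each factor by the constant $\left(\frac{-1}{p}\right)$, and the resulting correction terms collapse (via $\sum_c\left(\frac{a^2c^2-1}{p}\right)=\sum_u\left(\frac{u^2-1}{p}\right)=-1$) to $O(p)$. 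Terms with $a^2c^2=1$ vanish automatically, so it suffices to bound
\begin{equation*}
\tilde S=\sum_{(a,c)\in V} f(a)\,f(c)\left(\tfrac{a^2c^2-1}{p}\right),\qquad V=\{(a,c):a,c\neq0,\pm1,\ a^2c^2\neq1\}.
\end{equation*}

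Next I would package the $b$-integration as a sheaf. The family of curves $y^2=(a^2-x^2)(x^2-1)$ is a Legendre-type family with full two-torsion at $\pm a,\pm1$, so the weight-one part $\mathcal F$ of $H^1_c(\mathbb A^1_b,\mathcal L_\chi((a^2-b^2)(b^2-1)))$ is a lisse rank-two sheaf on $U=\mathbb A^1\smallsetminus\{0,\pm1\}$, pure of weight $1$, with $\mathrm{Tr}(\mathrm{Frob}_a\mid\mathcal F)=-f(a)+O(1)$ and geometric monodromy Zariski-dense in $\mathrm{SL}_2$ (the residual rank-one weight-zero piece contributes only lower-order terms). On the affine surface $V$ form $\mathcal G=(\mathcal F\boxtimes\mathcal F)\otimes\mathcal L_\chi(a^2c^2-1)$, a lisse sheaf of rank $4$ pure of weight $2$, so that by Grothendieck--Lefschetz $\tilde S=\sum_i(-1)^i\mathrm{Tr}(\mathrm{Frob}\mid H^i_c(V,\mathcal G))$ up to the $O(p)$ corrections above.

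Now I would run the weight bookkeeping. Since $V$ is affine of dimension $2$, $H^i_c(V,\mathcal G)=0$ for $i<2$, while $H^2_c$ has weight $\le4$ and hence contributes the desired $O(p^2)$. Everything reduces to controlling the two top groups. For $H^4_c$, the geometric coinvariants of $\mathcal G$, I would invoke a Goursat/Künneth argument: the two copies of $\mathcal F$ live on independent coordinate axes, so the monodromy of $\mathcal F\boxtimes\mathcal F$ contains $\mathrm{SL}_2\times\mathrm{SL}_2$ acting on $\mathrm{std}\boxtimes\mathrm{std}$, an irreducible four-dimensional representation with no trivial quotient even after the order-two twist by $\mathcal L_\chi$; hence $H^4_c=0$. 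For $H^3_c$ I would use the Leray spectral sequence of the projection $\mathrm{pr}:V\to U_a$: on each fibre $\mathcal F_a$ is constant, so $R^j\mathrm{pr}_!\mathcal G=\mathcal F\otimes R^j\mathrm{pr}_!\big(\mathcal F\boxtimes\mathcal L_\chi(a^2c^2-1)\big)$, and the fibrewise cohomology of this rank-two weight-one sheaf on the affine $c$-line is concentrated in degree $1$ away from finitely many $a$, producing a single lisse sheaf $\mathcal M=R^1\mathrm{pr}_!(\cdots)$ on $U_a$ of weight $2$. Consequently $H^3_c(V,\mathcal G)\cong H^2_c(U_a,\mathcal F\otimes\mathcal M)$ and $H^2_c(V,\mathcal G)\cong H^1_c(U_a,\mathcal F\otimes\mathcal M)$, the latter of weight $\le 4$.

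The hard part will be showing $H^2_c(U_a,\mathcal F\otimes\mathcal M)=0$, i.e. that $\mathcal F\otimes\mathcal M$ has no trivial geometric quotient. This is where the genuine work lies: I would compute the rank and the local monodromy of $\mathcal M$ at the finitely many bad points by the Euler--Poincaré/Ogg--Shafarevich formula (tracking the drop of the fibres over $a=0,\pm1$ and over the loci where $a^2c^2=1$ collides with $c=\pm1$), and then determine enough of the geometric monodromy group of $\mathcal M$ to rule out $\mathcal F$ occurring among its constituents. Because $\mathcal F$ is self-dual, absence of an invariant in $\mathcal F\otimes\mathcal M$ is equivalent to no constituent of $\mathcal M$ being geometrically isomorphic to $\mathcal F$, and the same self-duality/monodromy input handles the analogous vanishing used to bound the degenerate slices of Paragraph~1 (each of which reduces, via $\left(\frac{1-b^2}{p}\right)\left(\frac{b^2-1}{p}\right)=\left(\frac{-1}{p}\right)\mathbf 1_{b^2\neq1}$, to $(p+O(1))$ times a sum of the type $N$, itself $O(p)$ by the same reasoning). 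Granting the vanishing, $\tilde S=-\mathrm{Tr}(\mathrm{Frob}\mid H^2_c(V,\mathcal G))$ up to lower-order terms, of weight $\le4$ and bounded rank, hence $\tilde S=O(p^2)$; combined with the boundary contributions this proves the theorem.
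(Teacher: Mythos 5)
Your reductions are sound as far as they go: completing the $b,d$-sums, forming $\mathcal G=(\mathcal F\boxtimes\mathcal F)\otimes\mathcal L_\rho(a^2c^2-1)$ on the affine surface $V$, vanishing of $H^i_c$ for $i<2$ by Artin vanishing and duality, the weight-$\leq 4$ count for $H^2_c$, and the Leray identification $H^3_c(V,\mathcal G)\cong H^2_c(U_a,\mathcal F\otimes\mathcal M)$ with $\mathcal M=\mathrm R^1\mathrm{pr}_!\bigl(\mathcal F\otimes\mathcal L_\rho(a^2c^2-1)\bigr)$. You have also correctly located the entire difficulty: since $\mathcal G$ is pure of weight $2$ on a surface, $H^3_c$ is mixed of weight $\leq 5$ and contributes $O(p^{5/2})$ --- no better than your own Cauchy--Schwarz baseline --- unless it vanishes, i.e.\ unless no geometric constituent of $\mathcal M$ is isomorphic to $\mathcal F^\vee$ up to twist. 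But this decisive vanishing is exactly what you do not prove: you say you ``would compute'' the rank and local monodromies of $\mathcal M$ at $a=0,\pm 1$ (and at the collisions of the moving Kummer singularities $c=\pm a^{-1}$ with $c=\pm 1$) and ``determine enough of the geometric monodromy group''. That is not a routine bookkeeping step; $\mathcal M$ is a higher direct image whose local structure at the bad points requires an analysis of essentially the same depth as the theorem itself, and nothing in your sketch (self-duality of $\mathcal F$, Ogg--Shafarevich rank counts) by itself rules out an $\mathcal F$-isotypic constituent. As written, the proposal establishes only $O(p^{5/2})$.

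It is instructive to see how the paper gets around precisely this point. Instead of working on the surface, it first ``unsquares'' all four variables using $\#\{x:x^2=t\}=1+\rho(t)$, absorbing the new terms into elementary $O(p^2)$ reductions (complete one-variable Jacobi-type sums); after this the hard core is the one-variable sum $\sum_a\rho(a^2-a)\psi(a)$, where $\psi(a)=\sum_c\phi(ac^{-1})\phi(c)$ and $\phi$ is the trace function of the honest Legendre sheaf $\mathcal F$ for $y^2=x(x-1)(x-t)$. The point of this reshaping is that the two-variable object becomes the multiplicative $!$-convolution $\mathcal F\ast\mathcal F$ on $\mathbb G_m$, whose local monodromy is \emph{known}: by the cited local convolution results of the second author, $\mathcal G=\mathrm R^1\mu_!(\pi_1^\ast\mathcal F\otimes\pi_2^\ast\mathcal F)$ is a single sheaf, tame everywhere, of rank $4$ and weight $\leq 3$, with \emph{unipotent} monodromy at $0$. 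Tensoring with $\mathcal L=[t\mapsto t^2-t]^\ast\mathcal L_\rho$, which is ramified of order $2$ at $0$, then kills any geometrically constant subquotient, so $H^2_c$ vanishes for free and Ogg--Shafarevich gives the explicit bounds $4p^2$ (and $2p$ for the companion one-variable sum $\sum_c\rho(c^2-c)\phi(c)$). In short, the monodromy computation your plan defers is exactly what the paper outsources to the local convolution theory; to complete your route you would either have to carry out that analysis for your $\mathcal M$ directly, or perform the unsquaring first so that your fibration literally becomes a convolution and the cited results apply. (A smaller loose end: your boundary terms ``of type $N$'' are not $O(p)$ ``by the same reasoning'' for free either --- the paper handles these via known elementary estimates and the cited asymptotics of Bag--Barman, which is fine, but they deserve a reference rather than an appeal to the unproven machinery.)
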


\begin{remark}\label{remark}
For prime $p=4k+3$, one can see that the above expression is equal to zero, which can be seen by replacing $a$, $b$, $c$ and $d$ by their inverses modulo $p$.
\end{remark}
The techniques used to prove this estimate is important because one can see that the previous efforts to evaluate such character sums seems to stop at using Weil's result on curves. Our approach is a conceptual advancement to the previously known methods. With the help of the above estimate, we study $10$-th power mean value of generalized quadratic Gauss sums. To be specific, we prove the following:
\begin{theorem}\label{MT2}
Let $p$ be an odd prime and $n$ be any integer with $\gcd (n,p)=1$. Then we have
\begin{align*}
&\sum_{\chi\bmod p}|G(n,\chi;p)|^{10}=126\cdot p^6+O(p^{11/2}).\\
\end{align*}
\end{theorem}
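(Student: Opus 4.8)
The plan is to linearize the problem by reducing the tenth power to the fifth power of an explicit quadratic expression for $|G(n,\chi;p)|^2$. First I would observe that replacing $a$ by $p-a$ gives $G(n,\chi;p)=\chi(-1)G(n,\chi;p)$, so only even characters contribute and the principal character is negligible, $|G(n,\chi_0;p)|^{10}=\bigl|(\tfrac np)\tau_p-1\bigr|^{10}=O(p^5)$, where $\tau_p=\sum_a e(a^2/p)$ is the quadratic Gauss sum and $\tau_p^2=(\tfrac{-1}p)p$. For an even nonprincipal $\chi$, substituting $a=bc$ in $|G(n,\chi;p)|^2=\sum_{a,b}\chi(a)\overline\chi(b)e(n(a^2-b^2)/p)$ and evaluating the inner quadratic Gauss sum in $b$ yields the identity
\begin{equation*}
|G(n,\chi;p)|^2=2p+\left(\tfrac np\right)\tau_p\,A_\chi,\qquad A_\chi=\sum_{c=2}^{p-2}\chi(c)\left(\tfrac{c^2-1}p\right).
\end{equation*}

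Next I would raise this to the fifth power, expand binomially, and sum over the even nonprincipal characters, interchanging summations to obtain
\begin{equation*}
\sum_{\chi\bmod p}|G(n,\chi;p)|^{10}=\sum_{k=0}^{5}\binom5k(2p)^{5-k}\left(\tfrac np\right)^k\tau_p^{\,k}M_k+O(p^5),\qquad M_k=\sum_{\substack{\chi\ \mathrm{even}\\ \chi\neq\chi_0}}A_\chi^{\,k}.
\end{equation*}
Expanding $A_\chi^k$ and applying orthogonality over even characters, $\sum_{\chi\ \mathrm{even}}\chi(m)=\tfrac{p-1}2$ if $m\equiv\pm1$ and $0$ otherwise, collapses each moment to $M_k=\tfrac{p-1}2B_k+O(1)$, where
\begin{equation*}
B_k=\sum_{\substack{c_1,\dots,c_k\in[2,p-2]\\ c_1\cdots c_k\equiv\pm1}}\ \prod_{i=1}^{k}\left(\tfrac{c_i^2-1}p\right).
\end{equation*}

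The remaining task is to evaluate the $B_k$. Using $(\tfrac{c^{-2}-1}p)=(\tfrac{-1}p)(\tfrac{c^2-1}p)$ to eliminate the constrained variable I get at once $B_0=1$, $B_1=0$, $B_2=2(\tfrac{-1}p)(p-3)$, and, after grouping pairs of variables through their product $a=c_1c_2$ and using $\sum_{c_1c_2=a}(\tfrac{(c_1^2-1)(c_2^2-1)}p)=\sum_b(\tfrac{a^2-b^2}p)(\tfrac{b^2-1}p)=:\phi(a)$, the reductions $B_3=2(\tfrac{-1}p)N$, $B_4=2T+4(p-3)^2$, and $B_5=2(\tfrac{-1}p)W+O(p^2)$, where $N$ and $T$ are the sums of He--Liao \cite{yuan} and $W$ is the sum of Theorem~\ref{MT1}. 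The even indices then build the main term: $k=0$ gives $16p^6$, $k=2$ gives $80p^6$, and $k=4$ gives $30p^6$ (here I would insert the Bag--Barman asymptotic $T=p^2+O(p^{3/2})$ from \cite{BB} and retain the boundary term $4(p-3)^2$), so that $16+80+30=126$. The odd indices are error terms: $k=1$ is $O(p^{9/2})$ because $M_1=O(1)$, while $k=3$ and $k=5$ are $O(p^{11/2})$ because $|\tau_p^3|=p^{3/2}$ with $N=O(p)$ and $|\tau_p^5|=p^{5/2}$ with $B_5=O(p^2)$.

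The main obstacle is exactly the bound $B_5=O(p^2)$, which is Theorem~\ref{MT1}: grouping $a=c_1c_2$ and $c=c_3c_4$ turns the four-variable sum underlying $B_5$ into $\sum_{a,c}\phi(a)\phi(c)(\tfrac{a^2c^2-1}p)$, which is precisely $W$ up to the $O(p^2)$ boundary coming from $a,c\equiv\pm1$. The rest is careful bookkeeping: all lower-order polynomial pieces are $O(p^5)\subset O(p^{11/2})$, the $O(p^{3/2})$ error in the asymptotic for $T$ propagates to only $O(p^{11/2})$ after multiplication by $10p^3(p-1)$, and the substitution $c_i\mapsto c_i^{-1}$ (as in the Remark) gives $B_k=(\tfrac{-1}p)^kB_k$, confirming that for $p\equiv3\bmod4$ the odd-index sums $B_1,B_3,B_5$—and in particular $N$ and $W$—vanish identically, consistent with the clean polynomial shape of the lower moments in that case.
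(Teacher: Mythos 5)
Your proposal is correct and follows essentially the same route as the paper: the He--Liao identity $|G(n,\chi;p)|^2=2p+\left(\tfrac{n}{p}\right)\tau_p A_\chi$ (Lemma 2.3 with $A=2$), fifth-power binomial expansion, and reduction of the character moments to the auxiliary sums $N$, $T$ and the quintuple sum $S$ of Theorem 1.2, with the same inputs $N=O(p)$, $T=p^2+O(p^{3/2})$ and $S=O(p^2)$ producing the main terms $16+80+30=126$ from $k=0,2,4$ and $O(p^{11/2})$ from $k=1,3,5$. Restricting to even characters at the outset so that orthogonality yields the conditions $c_1\cdots c_k\equiv\pm1$ is only a cosmetic repackaging of the paper's bookkeeping with $A=1+\chi(-1)$ via Lemmas 2.4 and 2.5; your evaluations of $B_0,\dots,B_5$, including the boundary term $4(p-3)^2$ in $B_4$, reproduce the paper's $N_1,\dots,N_6$ exactly.
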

As some applications of these results, we can also deduce the following:
\begin{theorem}\label{MT1} Let $p$ be a prime. Then for any integer $n$ with $(n, p)=1$, we have the asymptotic formula
\begin{eqnarray*}
\sum_{\chi\neq\chi_0}\left|\sum_{a=1}^{p-1}\chi(a) e\left(\frac{na^2}{p}\right)\right|^{10}\cdot |L(1,\chi)|=126\cdot C\cdot p^6+ O\left(p^{\frac{11}{2}}\cdot \ln^2 p\right),
\end{eqnarray*}
where $C$ is defined as the same as in (1.1).
\end{theorem}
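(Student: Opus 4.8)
The plan is to derive the weighted tenth moment from the unweighted evaluation of Theorem \ref{MT2} by proving a decorrelation estimate
\begin{align*}
\sum_{\chi\neq\chi_0}|G(n,\chi;p)|^{10}\,|L(1,\chi)| = C\sum_{\chi\bmod p}|G(n,\chi;p)|^{10} + O\!\left(p^{11/2}\ln^2 p\right),
\end{align*}
after which Theorem \ref{MT2} immediately yields $126\,C\,p^6 + O(p^{11/2}\ln^2 p)$ (the term $\chi=\chi_0$ on the right contributes only $O(p^5)$ and is harmless). First I would record the structural identity underlying everything. Because $a\mapsto -a$ forces $G(n,\chi;p)=0$ for odd $\chi$, only even characters survive, and for even $\chi\neq\chi_0$ the substitution $a=bt$ followed by evaluation of the inner quadratic Gauss sum gives
\begin{align*}
|G(n,\chi;p)|^2 = 2p + B(\chi), \qquad B(\chi):=\left(\frac{n}{p}\right)\tau_p\sum_{t=2}^{p-2}\chi(t)\left(\frac{t^2-1}{p}\right)\in\mathbb{R},
\end{align*}
where $\tau_p=\sum_{b=0}^{p-1}e(b^2/p)$. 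Expanding $|G(n,\chi;p)|^{10}=(2p+B(\chi))^5$ binomially reduces the decorrelation estimate to the family of weighted character-sum moments $\sum_{\chi}B(\chi)^j|L(1,\chi)|$ for $0\le j\le 5$.

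Next I would identify the constant $C$ as the limiting average of $|L(1,\chi)|$. Writing $L(1,\chi)=\prod_q(1-\chi(q)/q)^{-1}$ and using
\begin{align*}
\bigl(1-re^{i\theta}\bigr)^{-1/2}\bigl(1-re^{-i\theta}\bigr)^{-1/2}=\sum_{k\ge0}\left(\frac{\binom{2k}{k}}{4^{k}}\right)^{2}r^{2k},
\end{align*}
one sees that averaging each local factor over the angle of $\chi(q)$ produces the local factor $\sum_{k\ge0}\bigl(\binom{2k}{k}/4^{k}\bigr)^2 q^{-2k}$, so that $\tfrac{1}{\phi(p)}\sum_{\chi}|L(1,\chi)|\to C$, with $C$ the Euler product of \eqref{constant-c}. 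The crux is then a decorrelation statement: the weight $B(\chi)$ is governed by the values of $\chi$ recorded in the multi-variable Legendre-symbol sums (our $O(p^2)$ five-fold estimate and its lower analogues $N$ and $T$), whereas $|L(1,\chi)|$ is governed by the values of $\chi$ at small primes; as $p\to\infty$ these data equidistribute independently, so averaging $|L(1,\chi)|$ against the weight $B(\chi)^j$ still returns the factor $C$, i.e. $\sum_{\chi}B(\chi)^j|L(1,\chi)|=C\sum_{\chi}B(\chi)^j+(\text{admissible error})$. Summing the binomial expansion then reassembles $C\sum_{\chi}|G(n,\chi;p)|^{10}$.

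To make this rigorous and to account for the error, I would truncate $L(1,\chi)$ to a Dirichlet polynomial of length a small power of $p$, control the tail using the classical bound $|L(1,\chi)|\ll\ln p$ together with the second moment $\sum_{\chi}|L(1,\chi)|^2\ll p\ln^2 p$, and evaluate the main term of $\sum_{\chi}B(\chi)^j(\text{truncated }|L|)$ by orthogonality, the off-diagonal terms being bounded by the character-sum estimates. The $O(p^{11/2})$ error of Theorem \ref{MT2}, carried through the factor $|L(1,\chi)|\ll\ln p$, contributes $O(p^{11/2}\ln p)$, while the truncation tail and the off-diagonal decorrelation error supply the second logarithm, yielding the stated $O(p^{11/2}\ln^2 p)$.

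The main obstacle is exactly this decorrelation step. Because the weight is the first power $|L(1,\chi)|$ rather than $|L(1,\chi)|^2$, the absolute value cannot be opened cleanly by orthogonality, so one cannot simply expand and diagonalize; instead one must control $|L(1,\chi)|$ through a truncated Euler product (or, equivalently, through the weighted-mean-value machinery already used to verify Conjecture \ref{C1} for $m\le 4$) and show that the quadratic structure defining $B(\chi)$ is asymptotically orthogonal to the small-prime structure defining $|L(1,\chi)|$, uniformly enough to keep the total error within $O(p^{11/2}\ln^2 p)$.
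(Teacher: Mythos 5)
Your proposal is correct and follows essentially the same route as the paper: the paper's own proof is only a sketch invoking Lemma 2.3 (the identity $|G(n,\chi;p)|^2=Ap+B\sum_{a=2}^{p-2}\chi(a)\left(\frac{a^2-1}{p}\right)$, which is your $2p+B(\chi)$ on even characters), the mean-value machinery of Zhang [10] in which $C$ arises as the average of $|L(1,\chi)|$ via the square-root Euler-product expansion $(1-x)^{-1/2}=\sum_{k}\binom{2k}{k}4^{-k}x^{k}$, and the estimate $\sum_{a=1}^{p-1}\bigl|\sum_{\chi\neq\chi_0}\chi(a)\,|L(1,\chi)|\bigr|\ll p\ln p$ to control the off-diagonal terms --- precisely the binomial expansion, truncation, orthogonality, and character-sum control you describe. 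Your reorganization of this as a decorrelation identity $\sum_{\chi}B(\chi)^{j}|L(1,\chi)|=C\sum_{\chi}B(\chi)^{j}+O(\cdot)$ feeding into the unweighted Theorem 1.4 is the same computation in different packaging, not a genuinely different method.
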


\begin{theorem}\label{MT1} Let $p$ be an odd prime, $\chi$ be any non-principal even character modulo $p$. Then we have the asymptotic formula
\begin{eqnarray*}
\mathop{\sum_{\chi\bmod p}}_
{\chi\neq\chi_0}\left|\sum_{a=1}^{p-1}\chi\left(a+\overline{a}\right) \right|^{4}=3 \cdot p^3+ O\left(p^{\frac{5}{2}}\cdot \ln p\right),
\end{eqnarray*}
where $\overline{a}$ denotes the multiplicative inverse of $a$. That is, $a\cdot \overline{a}\equiv 1\bmod p$.
\end{theorem}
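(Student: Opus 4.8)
The plan is to expand the inner sum, reduce the fourth power moment to a multiplicative convolution via orthogonality, isolate three explicit diagonal contributions each of size $p^2$, and show that the remaining mass is a single genuinely oscillatory character sum of size $O(p^{3/2})$. First I would record the shape of the inner sum. Writing $S(\chi)=\sum_{a=1}^{p-1}\chi(a+\overline a)$ and counting, for each $t$, the number of $a$ with $a+\overline a=t$ (the roots of $a^2-ta+1=0$, of which there are $1+\left(\frac{t^2-4}{p}\right)$, all nonzero), one obtains $S(\chi)=\sum_{t=1}^{p-1}\left(\frac{t^2-4}{p}\right)\chi(t)$ for $\chi\neq\chi_0$, since $\sum_t\chi(t)=0$. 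The substitution $t\mapsto 2t$ then gives $S(\chi)=\chi(2)\,S'(\chi)$ with $S'(\chi):=\sum_{t}\left(\frac{t^2-1}{p}\right)\chi(t)$, so $|S(\chi)|=|S'(\chi)|$. A change of variable $t\mapsto -t$ shows $S'(\chi)=\chi(-1)S'(\chi)$, so $S'$ vanishes on odd characters, which is why only even characters enter.

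Next I would apply orthogonality. Expanding $|S'(\chi)|^4=S'(\chi)^2\,\overline{S'(\chi)}^2$ and summing over all $\chi$ collapses the character to the condition $t_1t_2\equiv t_3t_4$, giving $\sum_{\chi\bmod p}|S'(\chi)|^4=(p-1)M$ with $M=\sum_{t_1t_2\equiv t_3t_4}\prod_i\left(\frac{t_i^2-1}{p}\right)$ over nonzero $t_i$; removing $\chi_0$ costs only $O(1)$ because $S'(\chi_0)=O(1)$. Parametrizing by $\lambda=t_1t_2$ rewrites $M=\sum_{\lambda}A(\lambda)^2$, where $A(\lambda)=\sum_{t}\left(\frac{(t^2-1)(\lambda^2-t^2)}{p}\right)$. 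The quartic in $t$ degenerates exactly at $\lambda=\pm1$, where $A(\pm1)=\left(\frac{-1}{p}\right)(p-3)$, so these two terms contribute $2(p-3)^2\sim 2p^2$.

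For the remaining $\lambda$, I would linearize by setting $u=t^2$, which turns $A(\lambda)$ into $-2\left(\frac{-1}{p}\right)+G(\lambda)$ with $G(\lambda)=\sum_u\left(\frac{u(u-1)(\lambda^2-u)}{p}\right)$, so $\sum_{\lambda\neq\pm1}A(\lambda)^2=\sum_\lambda G(\lambda)^2+O(p)$. Expanding $\sum_\lambda G(\lambda)^2$ and again substituting $v=\lambda^2$ produces a clean $u=u'$ diagonal contributing the third main term $p^2$, several elementary lower-order pieces, and one genuinely oscillatory remainder $\mathcal H=\sum_v\left(\frac vp\right)J(v)^2$ with $J(v)=\sum_u\left(\frac{u(u-1)(v-u)}{p}\right)$. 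Collecting the three diagonal blocks yields $M=3p^2+\mathcal H+O(p)$, whence $\sum_{\chi\neq\chi_0}|S(\chi)|^4=(p-1)M+O(1)=3p^3+(p-1)\mathcal H+O(p^2)$; the claimed asymptotic follows once $\mathcal H=O(p^{3/2}\ln p)$.

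The hard part is precisely this bound on $\mathcal H$. Here $J(v)$ is the trace of Frobenius on the elliptic curve $y^2=u(u-1)(v-u)$, so $J(v)^2$ is a weight-$2$ quantity of typical size $p$; since $J(v)^2\ge 0$, all cancellation must come from the oscillation of $\left(\frac vp\right)$, and a termwise application of Weil's bound $|J(v)|\le 2\sqrt p$ gives only the useless $O(p^2)$. I would instead regard $\mathcal H$ as a complete character sum in the three variables $(u,u',v)$, verify that its naive diagonal $u=u'$ contributes $O(1)$, and invoke Deligne's Riemann Hypothesis over finite fields to secure the square-root cancellation $\mathcal H=O(p^{3/2})$. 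Confirming that the relevant cohomology is pure of the expected weight with no hidden invariant (which would manufacture a spurious extra $p^2$) is the delicate step, and is exactly where one must go beyond Weil's curve estimate in the spirit of Theorem 1.1.
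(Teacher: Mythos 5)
Your route is correct in outline but genuinely different from the paper's. After the same initial reduction $\left|\sum_a\chi(a+\overline a)\right|=\left|\sum_t\chi(t)\left(\frac{t^2-1}{p}\right)\right|$ (the paper's Lemma 2.8, which you reprove), the paper does \emph{not} open the fourth moment by orthogonality. Instead it uses the identity of Lemma 2.3: for even $\chi\neq\chi_0$ one has $\sum_t\chi(t)\left(\frac{t^2-1}{p}\right)=\bigl(|G(1,\chi;p)|^2-2p\bigr)/\tau(\chi_2)$, so the fourth moment equals $p^{-2}\sum_{\chi(-1)=1,\,\chi\neq\chi_0}\bigl(|G(1,\chi;p)|^2-2p\bigr)^4$; expanding binomially and inserting the known asymptotics for $\sum_\chi|G(1,\chi;p)|^{2k}$, $k=1,2,3,4$ (Zhang's exact second and fourth moments, Bag--Barman's $10p^4+O(p^{7/2})$ and $35p^5+O(p^{9/2})$) gives the main term $(35-80+72-32+8)\,p^3=3p^3$. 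Thus the paper outsources all depth to the eighth moment and needs no new sheaf-theoretic input for this theorem, while your argument is self-contained at the level of the fourth moment. Your elementary decomposition checks out: $M=\sum_\lambda A(\lambda)^2$, the degenerate fibres $\lambda=\pm1$ give $2(p-3)^2$, the $u=u'$ diagonal gives $p^2+O(p)$, the discarded pieces are $O(p)$, and dropping $\chi_0$ and the odd characters costs $O(1)$. The one step you leave as a sketch, $\mathcal H=\sum_v\left(\frac{v}{p}\right)J(v)^2=O(p^{3/2})$, is in fact easier than you suggest and simpler than Theorem 1.1: $J(v)$ is (up to a quadratic twist, which does not affect the square) the Frobenius trace of the paper's Legendre sheaf $\mathcal F$, and for $v\neq0,1$ one has $J(v)^2=\mathrm{tr}\bigl(\mathrm{Frob}_v|\mathrm{Sym}^2\mathcal F\bigr)+p$ since $\wedge^2\mathcal F$ is the Tate twist; the term $p\sum_{v\neq0,1}\left(\frac{v}{p}\right)$ is $O(p)$, and since the geometric monodromy of $\mathcal F$ is $SL_2$, the rank-3 sheaf $\mathrm{Sym}^2\mathcal F$ is geometrically irreducible, so its tensor product with the nontrivial Kummer sheaf attached to $\left(\frac{\cdot}{p}\right)$ has no geometrically constant component (your feared ``hidden invariant'' is excluded exactly here), whence $\mathrm H^2_c=0$, $\mathrm H^1_c$ has bounded dimension by Ogg--Shafarevich (everything is tame), and weights $\leq3$ give $\mathcal H=O(p^{3/2})$ --- no convolution sheaf as in Theorem 1.1 is required. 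Net comparison: the paper's proof is shorter because it recycles prior moment asymptotics; yours requires one standard symmetric-square estimate but in exchange yields the slightly sharper error $O(p^{5/2})$, without the $\ln p$ in the stated theorem.
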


\begin{theorem}\label{MT1}
 Let $p$ be an odd prime. Then we have the asymptotic formula
\begin{eqnarray*}
\mathop{\sum_{\chi\bmod p}}_
{\chi\neq\chi_0}\left|\sum_{a=1}^{p-1}\chi\left(a+\overline{a}\right) \right|^{4}\cdot |L(1,\chi)|=3\cdot C \cdot p^3 + O\left(p^{\frac{5}{2}}\cdot \ln^2 p\right).
\end{eqnarray*}
\end{theorem}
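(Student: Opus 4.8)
The plan is to reduce the weighted fourth moment of $S(\chi):=\sum_{a=1}^{p-1}\chi(a+\overline a)$ to the $L$-function–weighted power moments of the quadratic Gauss sum $G(n,\chi;p)$ that are already available, and then to use the (unweighted) fourth moment from the preceding theorem purely as a bookkeeping device. First I would dispose of the odd characters: replacing $a$ by $-a$ shows $S(\chi)=\chi(-1)S(\chi)$ and $G(n,\chi;p)=\chi(-1)G(n,\chi;p)$, so both vanish identically when $\chi(-1)=-1$, and every sum below runs only over even non-principal $\chi$. For such $\chi$ I would open $|G(n,\chi;p)|^2$ in the usual way (set $a=bc$ and carry out the inner Gauss-sum summation over $b$), which yields the identity
\[
|G(n,\chi;p)|^2=2p+\left(\frac np\right)G(1;p)\,\overline{\chi}(2)\,S(\chi),
\]
where $G(1;p)=\sum_{x}e(x^2/p)$ satisfies $|G(1;p)|^2=p$. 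Since $|G(n,\chi;p)|^2-2p$ is real, taking modulus squared gives the clean relation $\bigl(|G(n,\chi;p)|^2-2p\bigr)^2=p\,|S(\chi)|^2$, and hence
\[
|S(\chi)|^4=\frac1{p^2}\bigl(|G(n,\chi;p)|^2-2p\bigr)^4\qquad(\chi\text{ even},\ \chi\neq\chi_0).
\]

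Next I would expand by the binomial theorem and insert the result into the weighted sum, obtaining
\[
\sum_{\chi\neq\chi_0}|S(\chi)|^4\,|L(1,\chi)|=\frac1{p^2}\sum_{k=0}^{4}\binom4k(-2p)^{4-k}\,H_k,\qquad H_k:=\sum_{\substack{\chi\neq\chi_0\\ \chi(-1)=1}}|G(n,\chi;p)|^{2k}\,|L(1,\chi)|.
\]
Because $G(n,\chi;p)=0$ for odd $\chi$, for each $k\ge1$ the sum $H_k$ coincides with $\sum_{\chi\neq\chi_0}|G(n,\chi;p)|^{2k}|L(1,\chi)|$, which for $1\le k\le4$ is exactly the hybrid moment treated while proving Conjecture \ref{C1} in the range $m\le4$; writing $M_k:=\sum_{\chi\neq\chi_0,\,\chi(-1)=1}|G(n,\chi;p)|^{2k}$ for the corresponding unweighted moment, each such case gives $H_k=C\,M_k+E_k$ with $E_k=O(p^{k+1/2}\ln^2 p)$ (the principal-character discrepancy between $M_k$ and the full moment $\sum_{\chi\bmod p}|G|^{2k}$ is $O(p^k)$ and is absorbed into $E_k$). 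The only genuinely new input is $k=0$, the first moment $H_0=\sum_{\chi\neq\chi_0,\,\chi(-1)=1}|L(1,\chi)|$, which is the degenerate $m=0$ case of the same circle of ideas and satisfies $H_0=C\,M_0+O(\sqrt p\,\ln^2 p)$ with $M_0=\tfrac{p-3}2$, the constant $C$ being precisely the mean value of $|L(1,\chi)|$ over the even characters.

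With $H_k=C\,M_k+E_k$ for every $0\le k\le4$ the computation collapses. Factoring out $C$,
\[
\sum_{\chi\neq\chi_0}|S(\chi)|^4|L(1,\chi)|=C\cdot\frac1{p^2}\sum_{k=0}^4\binom4k(-2p)^{4-k}M_k+\frac1{p^2}\sum_{k=0}^4\binom4k(-2p)^{4-k}E_k.
\]
By the same binomial identity the first sum equals the unweighted fourth moment $\sum_{\chi\neq\chi_0}|S(\chi)|^4$, which the preceding theorem evaluates as $3p^3+O(p^{5/2}\ln p)$. In the error sum the term of index $k$ carries the factor $\tfrac1{p^2}\binom4k 2^{4-k}p^{4-k}=\binom4k 2^{4-k}p^{2-k}$, so it contributes $O(p^{2-k}\cdot p^{k+1/2}\ln^2 p)=O(p^{5/2}\ln^2 p)$. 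Combining the five contributions yields $\sum_{\chi\neq\chi_0}|S(\chi)|^4|L(1,\chi)|=3C\,p^3+O(p^{5/2}\ln^2 p)$, as claimed.

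The main obstacle is not the algebra but the two quantitative inputs feeding the constant $C$. First, one must know that all five hybrid moments $H_0,\dots,H_4$ carry the \emph{same} constant $C$ together with the stated power-saving errors; for $1\le k\le4$ this is exactly the already-established content of Conjecture \ref{C1}, but the $k=0$ first moment must be proved with an error as small as $O(\sqrt p\,\ln^2 p)$, precisely because it is amplified by the large factor $16p^2$ in the final sum. The delicate common feature is the treatment of the weight $|L(1,\chi)|$: one truncates its Dirichlet series at length about $\sqrt p$ and bounds the resulting incomplete character sums, and it is this truncation—performed on both $L(1,\chi)$ and its complex conjugate—that produces the extra logarithmic factor, upgrading the $\ln p$ of the unweighted theorem to $\ln^2 p$. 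Keeping each error term below $p^{5/2}\ln^2 p$ after multiplication by the amplifying factors $2^{4-k}p^{2-k}$ is the part that will require the most care.
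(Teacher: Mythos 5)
Your proposal is correct and is essentially the paper's own (very tersely sketched) argument: the paper likewise derives Theorem 1.7 by inserting the weight $|L(1,\chi)|$ into the proof of Theorem 1.6, i.e.\ via Lemma 2.3 and Lemma 2.8 one has $|S(\chi)|^{4}=p^{-2}\left(|G(n,\chi;p)|^{2}-2p\right)^{4}$ for even non-principal $\chi$, expands binomially, and invokes the $|L(1,\chi)|$-weighted moments of $|G(n,\chi;p)|^{2k}$ for $k\le 4$ from references [10] and [2] (the ``conventional methods''), with the stated bound $\sum_{a=1}^{p-1}\left|\sum_{\chi\neq\chi_0}\chi(a)|L(1,\chi)|\right|\ll p\ln p$ controlling the off-diagonal errors. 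The one input you flag as genuinely new, $H_0=\sum_{\chi(-1)=1,\,\chi\neq\chi_0}|L(1,\chi)|=\tfrac{C}{2}\,p+O\left(p^{1/2}\ln^{2}p\right)$, is in fact already contained in reference [10]: it is exactly the mean-value lemma for $|L(1,\chi)|$ from which the constant $C$ of (1.1) originates, so your argument has no genuine gap.
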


In fact for any positive integer $k$, we have the following several conjectures:

\begin{conjecture}\label{MT1} Let $p$ be a prime large enough, $k$ be any positive integer. Then we have the asymptotic formula
\begin{eqnarray*}
\frac{1}{p^{k+1}}\cdot \sum_{\chi \bmod p}\left|\sum_{a=1}^{p-1}\chi(a) e\left(\frac{na^2}{p}\right)\right|^{2k}=\binom{2k-1}{k} + o(1).
\end{eqnarray*}
\end{conjecture}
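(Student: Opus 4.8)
The plan is to replace the average over characters by an equidistribution statement for a one–parameter family of normalized Gauss (equivalently Jacobi) sums, and then to integrate an explicit trigonometric moment against the limiting uniform measure; the arithmetic content enters only through square-root cancellation for finitely many complete multi-variable character sums of the kind estimated earlier in this paper.

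First I would carry out the elementary reductions. Replacing $a$ by $-a$ gives $G(n,\chi;p)=\chi(-1)\,G(n,\chi;p)$, so only the $\tfrac{p-1}{2}$ even characters contribute. For an even $\chi$ one has $\chi(a)=\lambda(a^{2})$ for a character $\lambda$ of the group of squares, and separating the quadratic character $\eta=\left(\tfrac{\cdot}{p}\right)$ produces the identity
\begin{align*}
G(n,\chi;p)=\tau(\widetilde\lambda,n)+\tau(\widetilde\lambda\,\eta,n),
\end{align*}
where $\widetilde\lambda$ is an extension of $\lambda$ to $\mathbb{F}_p^{\ast}$ and $\tau(\psi,n)=\sum_{t}\psi(t)\,e(nt/p)$ is an ordinary Gauss sum. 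Away from the $O(1)$ degenerate characters (those for which $\widetilde\lambda$ or $\widetilde\lambda\,\eta$ is principal) both summands have modulus $\sqrt p$, so writing $\tau(\widetilde\lambda,n)\,\overline{\tau(\widetilde\lambda\,\eta,n)}=p\,e^{i\theta_{\chi}}$ we obtain $\bigl|G(n,\chi;p)\bigr|^{2}=2p\,(1+\cos\theta_{\chi})$, whence
\begin{align*}
\frac{1}{p^{k+1}}\sum_{\chi\bmod p}\bigl|G(n,\chi;p)\bigr|^{2k}
=\frac{(2p)^{k}}{p^{k+1}}\sum_{\chi\ \mathrm{even}}(1+\cos\theta_{\chi})^{k}+o(1),
\end{align*}
the error absorbing the odd characters (which contribute $0$) and the degenerate ones (each $O(p^{k})$, hence $o(p^{k+1})$ in total).

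Next I would make the angle explicit and reduce to equidistribution. From $\tau(\psi,n)=\overline{\psi}(n)\tau(\psi)$ and the Jacobi relation $\tau(\widetilde\lambda)\tau(\eta)=J(\widetilde\lambda,\eta)\,\tau(\widetilde\lambda\,\eta)$ one computes
\begin{align*}
e^{i\theta_{\chi}}=\eta(n)\,\frac{J(\widetilde\lambda,\eta)}{\tau(\eta)},
\end{align*}
a fixed unimodular rotation of $J(\widetilde\lambda,\eta)/\sqrt p$. Since the uniform measure on the circle is rotation invariant, $\theta_{\chi}$ equidistributes uniformly as $\lambda$ varies precisely when the angles of the normalized Jacobi sums $J(\widetilde\lambda,\eta)/\sqrt p$ do. Granting this, the trigonometric moment is elementary: writing $2(1+\cos\theta)=\bigl|1+e^{i\theta}\bigr|^{2}$ gives $(1+\cos\theta)^{k}=2^{-k}\sum_{a,b}\binom{k}{a}\binom{k}{b}e^{i(a-b)\theta}$, and integrating against the uniform measure retains only $a=b$, so that $\int(1+\cos\theta)^{k}=2^{-k}\binom{2k}{k}$. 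Combined with $\#\{\chi\ \mathrm{even}\}\sim p/2$ this yields
\begin{align*}
\frac{1}{p^{k+1}}\sum_{\chi\bmod p}\bigl|G(n,\chi;p)\bigr|^{2k}
\longrightarrow 2^{\,k-1}\cdot 2^{-k}\binom{2k}{k}=\tfrac12\binom{2k}{k}=\binom{2k-1}{k},
\end{align*}
which is the asserted constant.

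The whole statement therefore rests on the equidistribution of the angles of $J(\widetilde\lambda,\eta)/\sqrt p$, equivalently on the bounds $\sum_{\lambda}\bigl(J(\widetilde\lambda,\eta)/\sqrt p\bigr)^{m}=o(p)$ for $1\le|m|\le k$. Expanding $J(\widetilde\lambda,\eta)^{m}$ and summing over $\lambda$ collapses the multiplicative variable by orthogonality and reduces each such moment to a complete multi-variable Legendre-symbol sum of exactly the kind estimated in this paper's main character-sum theorem, of which the five-fold sum treated there is a representative instance (one may equally reach the same sums through the direct route, expanding by orthogonality as $\sum_{\chi}|G|^{2k}=(p-1)\sum_{m\neq0}|T_k(m)|^{2}$ with $T_k(m)=\sum_{a_1\cdots a_k=m}e(n(a_1^{2}+\cdots+a_k^{2})/p)$). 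For each individual $m$ these are sums over a torus and should exhibit square-root cancellation by the estimates of Weil and Deligne, giving $O(\sqrt p)=o(p)$; this is exactly the mechanism by which the oscillatory ($m\neq0$) modes drop out and only the $m=0$ term survives in the main term. \textbf{The main obstacle} is to make this uniform in $k$: showing that \emph{all} nonzero moments cancel simultaneously amounts to computing the geometric monodromy of the sheaf underlying $\lambda\mapsto J(\widetilde\lambda,\eta)$ and proving it large enough to force uniform equidistribution with no hidden main term for any $m$. For each fixed small $k$ the required nondegeneracy can be checked by hand — which is what the paper's explicit evaluations accomplish — but the uniform monodromy statement needed for all $k$ is the genuinely hard, and so far conjectural, ingredient.
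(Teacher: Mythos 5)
You should be aware at the outset that the paper contains no proof of this statement: it is Conjecture 1.8, which the authors verify only for $k\le 5$ (through the explicit second- through tenth-moment evaluations, the last being the $126\,p^6$ asymptotic of Theorem 1.4) and explicitly declare open for $k\ge 6$. So there is no paper proof to compare yours against, and your submission --- which candidly flags its final step as unproved --- is likewise not a complete proof. Judged on its own, your reductions are all correct: $G(n,\chi;p)=0$ for odd $\chi$; for even $\chi=\widetilde{\lambda}^{\,2}$ one has $G(n,\chi;p)=\tau(\widetilde{\lambda},n)+\tau(\widetilde{\lambda}\eta,n)$, hence $|G|^{2}=2p(1+\cos\theta_{\chi})$ with $e^{i\theta_{\chi}}=\eta(n)J(\widetilde{\lambda},\eta)/\tau(\eta)$ a fixed rotation of the normalized Jacobi sum; and the uniform-measure moment $2^{-k}\binom{2k}{k}$, combined with the count $\sim p/2$ of even characters, yields exactly $\tfrac12\binom{2k}{k}=\binom{2k-1}{k}$, consistent with the proved values $3,10,35,126$ for $k=2,3,4,5$.

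The genuine flaw is in your diagnosis of the obstacle. The conjecture fixes $k$ and lets $p\to\infty$, so no uniformity in $k$ is needed: for a given $k$ you require only the finitely many moment bounds $\sum_{\lambda}\bigl(J(\lambda,\eta)/\sqrt{p}\bigr)^{m}=o(p)$ for $1\le m\le k$ (and conjugates), and by orthogonality each reduces to a \emph{single} complete sum $S_m=\sum_{x_1\cdots x_m=1}\eta\bigl((1-x_1)\cdots(1-x_m)\bigr)$ over an $(m-1)$-dimensional torus. Square-root cancellation $S_m=O_m\bigl(p^{(m-1)/2}\bigr)$ for these particular sums is not an open monodromy problem; it is within the Katz--Deligne technology the paper itself uses: $\mathcal{L}_{\eta(1-x)}$ is a hypergeometric sheaf of type $(1,1)$, its $m$-fold multiplicative $!$-convolution is (up to negligible pieces) the geometrically irreducible hypergeometric of type $(m,m)$ with trivial upstairs and quadratic downstairs characters, pure of weight $m-1$, and $S_m$ is read off its stalk at $t=1$; alternatively, writing $J(\lambda,\eta)=\tau(\lambda)\tau(\eta)/\tau(\lambda\eta)$ reduces the moments to Kloosterman-type complete sums covered by Deligne's bound. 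So the ingredient you label ``so far conjectural'' is essentially available, and carrying it out (or citing it precisely) would upgrade your program to an actual proof --- stronger than what the paper claims to know, since the authors leave $k\ge 6$ open. As written, however, the central equidistribution step is asserted rather than established, so the argument stands as a program with a gap; and your remark that the paper's ``explicit evaluations'' accomplish the required nondegeneracy checks for small $k$ misdescribes the paper, whose verifications for $k\le 5$ proceed by direct evaluation of the moment sums (Lemmas 2.1--2.7 together with the sheaf-theoretic estimate of the five-fold sum in Theorem 1.2), not through any statistics of Jacobi-sum angles.
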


\begin{conjecture}\label{MT1} Let $p$ be a prime large enough, $k$ be any positive integer. Then for any integer $n$ with $(n, p)=1$, we have the asymptotic formula
\begin{eqnarray*}
\frac{1}{p^{k+1}}\cdot\sum_{\chi\neq\chi_0}\left|\sum_{a=1}^{p-1}\chi(a) e\left(\frac{na^2}{p}\right)\right|^{2k}\cdot |L(1,\chi)|=\binom{2k-1}{k}\cdot C+ o\left(1\right).
\end{eqnarray*}
\end{conjecture}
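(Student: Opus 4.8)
The plan is to reduce the weighted moment to two ingredients that already appear as named statements in the paper: the unweighted asymptotic $\sum_{\chi\bmod p}|G(n,\chi;p)|^{2k}\sim\binom{2k-1}{k}p^{k+1}$ and the transfer principle of Conjecture \ref{C1}, which asserts that inserting the weight $|L(1,\chi)|$ multiplies the main term by $C$. Since the substitution $a\mapsto -a$ gives $G=\chi(-1)G$, odd characters annihilate $G(n,\chi;p)$, so only even $\chi$ contribute. I would first record the exact closed form obtained by writing $a=bc$ in $|G|^2=\sum_{a,b}\chi(a)\overline{\chi(b)}e(n(a^2-b^2)/p)$ and evaluating the inner quadratic Gauss sum: for even $\chi\neq\chi_0$,
\[
|G(n,\chi;p)|^2 = 2p + \left(\frac{n}{p}\right)\sqrt{p^{\ast}}\,S(\chi),\qquad S(\chi)=\sum_{c=2}^{p-2}\chi(c)\left(\frac{c^2-1}{p}\right),
\]
where $p^{\ast}=\left(\frac{-1}{p}\right)p$ and $G(1;p)=\sqrt{p^{\ast}}$. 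Using $\overline{S(\chi)}=\left(\frac{-1}{p}\right)S(\chi)$ one checks $\bigl(|G|^2-2p\bigr)^2=p\,|S(\chi)|^2$, so setting $R=|G|^2-2p$ the binomial expansion of $(2p+R)^k$ collapses, modulo the odd powers of $R$, to
\[
|G(n,\chi;p)|^{2k}=\sum_{r\ge 0}\binom{k}{2r}2^{k-2r}p^{k-r}\,|S(\chi)|^{2r}+(\text{odd-order terms}),
\]
the odd powers of $R$ carrying an extra $\sqrt{p^{\ast}}$ but losing a full power of $p$ in the diagonal count, hence contributing only $O(p^{k}\ln p)$. Everything thus reduces to the weighted sums $\Sigma_r:=\sum_{\chi\ \mathrm{even},\,\neq\chi_0}|S(\chi)|^{2r}\,|L(1,\chi)|$.

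Next I would evaluate the unweighted companion $\Sigma_r^0:=\sum_{\chi\ \mathrm{even}}|S(\chi)|^{2r}$. Expanding $|S(\chi)|^{2r}$ into $2r$ multiplicative variables and summing over even characters (which detect the relation $a_1\cdots a_r\equiv\pm b_1\cdots b_r$) turns $\Sigma_r^0$ into $\tfrac{p-1}{2}$ times a $2r$-fold Legendre-symbol sum $\widetilde M_{2r}$ of exactly the type exemplified by $N$, $T$, and the multivariable character-sum estimate established above (whose main term is $O(p^2)$); indeed the $r=2$ instance is precisely what that estimate is built to control for the tenth moment. The main term of $\widetilde M_{2r}$ comes from the degenerate loci on which the $a$'s and $b$'s pair into reciprocal pairs and the Legendre product becomes non-oscillating; a careful count of these loci should give $\widetilde M_{2r}=\binom{2r}{r}p^{r}+O(p^{r-1/2})$, the genuinely mixed configurations being $O(p^{r-1/2})$ by Weil. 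Inserting this and summing over $r$, the leading coefficient is $\tfrac12\sum_{r}\binom{k}{2r}2^{k-2r}\binom{2r}{r}$, and the elementary identity $\sum_{r}\binom{k}{2r}2^{k-2r-1}\binom{2r}{r}=\binom{2k-1}{k}$ (readily proved by generating functions) yields $\sum_{\chi\bmod p}|G(n,\chi;p)|^{2k}=\binom{2k-1}{k}p^{k+1}+O(p^{k+1/2})$, the unweighted companion statement.

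It then remains to show that attaching $|L(1,\chi)|$ replaces each $\Sigma_r^0$ by $C\,\Sigma_r^0$, i.e. to prove Conjecture \ref{C1} in this setting. I would expand $|L(1,\chi)|$ through the approximate functional equation, truncating the Dirichlet series at height $\asymp p$, interchange summations, and apply the even-character orthogonality as above; the resulting main term is an arithmetic correlation between the divisor-type coefficients of $|L(1,\chi)|$ and the quadratic weight $\left(\frac{c^2-1}{p}\right)$ carried by $S(\chi)$. Because this correlation is multiplicative in the auxiliary modulus, the main term factors as an Euler product whose local factor at a prime $\ell$ is precisely $\sum_{m\ge 0}\binom{2m}{m}^2 4^{-2m}\ell^{-2m}$, reproducing the constant $C$ of \eqref{constant-c} (one recognises the local factor as ${}_2F_1(\tfrac12,\tfrac12;1;\ell^{-2})$). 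The off-diagonal and tail contributions would be controlled by Polya--Vinogradov together with Weil's bound and partial summation, yielding an error of size $O(p^{k+1/2}\ln^{2}p)$. Combining this with the previous paragraph gives $\sum_{\chi\neq\chi_0}|G(n,\chi;p)|^{2k}|L(1,\chi)|=C\binom{2k-1}{k}p^{k+1}+o(p^{k+1})$, which is the assertion after division by $p^{k+1}$.

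The main obstacle is twofold, and it is exactly what confines the currently known cases to $k\le 4$. First, the evaluation of the general sum $\widetilde M_{2r}$: isolating the reciprocal-pairing main term $\binom{2r}{r}p^{r}$ with a power-saving error \emph{uniformly in} $r$ requires pushing the conceptual method behind the $O(p^2)$ estimate far past the single sum treated there, since the relevant varieties grow in dimension with $r$. Second, and more seriously, is the uniform extraction of $C$ from the $|L(1,\chi)|$-weighting: proving Conjecture \ref{C1} for all $k$ means controlling the correlation main term and its error \emph{simultaneously} for every weight $|S(\chi)|^{2r}$ with $r\le k$, keeping the error genuinely below the main term $\asymp p^{k+1}$. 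It is the need for these two uniform statements, rather than any one fixed moment, that constitutes the true difficulty.
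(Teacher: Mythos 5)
What you have written is not a proof, and it could not be one: the statement you were asked to address is stated in the paper as a \emph{conjecture} (it is Conjecture 1.9), and the authors say explicitly that it is known only for $k=1,\dots,5$ and that the cases $k\geq 6$ are open problems. Your outline correctly reproduces the architecture that underlies the known small-$k$ cases — the identity $|G(n,\chi;p)|^2=2p+B\,S(\chi)$ for even $\chi$ (the paper's Lemma 2.3 with $A=2$), the real/imaginary dichotomy giving $R^2=p\,|S(\chi)|^2$, and the combinatorial identity $\sum_r\binom{k}{2r}2^{k-2r}\binom{2r}{r}=\binom{2k}{k}=2\binom{2k-1}{k}$, which does correctly account for the constant $\binom{2k-1}{k}$ when summing over the $\sim p/2$ even characters. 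But the two steps that carry all the content are asserted, not proved, and you concede as much in your final paragraph. First, the claim $\widetilde M_{2r}=\binom{2r}{r}p^{r}+O(p^{r-1/2})$ with the mixed configurations handled ``by Weil'' is exactly what fails in general: the paper stresses that already for the single sum $S$ needed at $k=5$ (their Theorem 1.2, the five-fold Legendre correlation), Weil's bounds for curves do not suffice, and the proof requires $\ell$-adic sheaf machinery — a convolution $\mathcal F\ast\mathcal F$ of the cohomology sheaf of the Legendre family, vanishing of $\mathrm H^2_c$, and Ogg--Shafarevich dimension counts. For general $r$ the analogous correlations live on varieties of growing dimension, and no uniform power-saving estimate is known; writing ``should give'' does not discharge this.

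Second, and independently, the step ``attaching $|L(1,\chi)|$ replaces $\Sigma_r^0$ by $C\,\Sigma_r^0$'' is precisely Conjecture 1.1 of the paper, i.e.\ the transfer principle whose truth for all $m$ is the open question; invoking it makes your argument circular. Your proposed mechanism — approximate functional equation, orthogonality over even characters, and a claimed factorization of the resulting correlation into the Euler product with local factors $\sum_{m\geq 0}\binom{2m}{m}^2 4^{-2m}\ell^{-2m}$ — is a plausible heuristic for where the constant $C$ of (1.1) comes from (and your identification of the local factor with ${}_2F_1(\tfrac12,\tfrac12;1;\ell^{-2})$ is a nice observation), but no argument is given that the off-diagonal terms are $O(p^{k+1/2}\ln^2 p)$ \emph{uniformly} once the weight $|S(\chi)|^{2r}$ is attached; in the proved cases this is handled by the bound $\sum_{a=1}^{p-1}\bigl|\sum_{\chi\neq\chi_0}\chi(a)|L(1,\chi)|\bigr|\ll p\ln p$ together with the specific closed-form evaluations of the low moments, not by an Euler-product computation. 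In short: your proposal is a reasonable roadmap that reduces the conjecture to two open uniform statements (the diagonal evaluation of $\widetilde M_{2r}$ for all $r$, and the $C$-transfer), which is exactly the state of the art the paper describes, but it does not prove the statement, and no proof of it exists in the paper to compare against.
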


It is clear that Conjecture 1.8  and Conjecture 1.9 are correct for $k=1, \ 2, \ 3, \ 4, \ 5$. For integer $k\geq 6$, whether they are correct are two open problems.

For Theorem 1.6 and Theorem 1.7, we also have two  corresponding conjectures:

\begin{conjecture}\label{MT1}  Let $p$ be a prime large enough. Then for any positive integer $k$, we have the asymptotic formula
\begin{eqnarray*}
\frac{1}{p^{k+1}}\cdot \mathop{\sum_{\chi \bmod p}}_{\chi\neq \chi_0}\left|\sum_{a=1}^{p-1}\chi\left(a+\overline{a}\right)\right|^{2k}=\binom{2k-1}{k} + o(1).
\end{eqnarray*}
\end{conjecture}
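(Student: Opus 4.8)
The plan is to turn the sum into a moment of Gauss sums, where the binomial main term appears transparently, and to isolate the genuine analytic difficulty as an equidistribution statement. Write $\psi=\left(\frac{\bullet}{p}\right)$ for the Legendre symbol and put $S(\chi)=\sum_{a=1}^{p-1}\chi(a+\overline a)$. Since the number of $a\in\mathbb{F}_p^{\times}$ solving $a+\overline a=t$ equals $1+\psi(t^2-4)$, orthogonality gives, for every $\chi\neq\chi_0$,
\[
S(\chi)=\sum_{t}\chi(t)\bigl(1+\psi(t^2-4)\bigr)=\sum_{t}\chi(t)\,\psi(t^2-4).
\]
Replacing $t\mapsto -t$ yields $S(\chi)=\chi(-1)S(\chi)$, so $S(\chi)=0$ unless $\chi$ is even; only the $(p-3)/2$ non-principal even characters survive, which already accounts for the factor $\tfrac12$ hidden in $\binom{2k-1}{k}=\tfrac12\binom{2k}{k}$.

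Next I would expose the Gauss-sum structure. Writing an even character as $\chi=\rho^2$ and substituting $u=t^2$ turns $S(\chi)$ into a sum of two Jacobi sums,
\[
S(\chi)=\psi(-1)\,\rho(4)\bigl(J(\rho,\psi)+J(\rho\psi,\psi)\bigr),
\]
and, using $J(\eta,\psi)=\tau(\eta)\tau(\psi)/\tau(\eta\psi)$ for the Gauss sum $\tau$, the quotient of the two terms collapses to $J(\rho,\psi)/J(\rho\psi,\psi)=\bigl(\tau(\rho)/\tau(\rho\psi)\bigr)^2$, a complex number of modulus one. Hence, away from $O(1)$ degenerate characters, $|S(\chi)|^2=p\,|1+e^{i\theta_\rho}|^2$ with $e^{i\theta_\rho}=\bigl(\tau(\rho)/\tau(\rho\psi)\bigr)^2$. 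Expanding
\[
|1+e^{i\theta}|^{2k}=\sum_{j=-k}^{k}\binom{2k}{k+j}e^{ij\theta}
\]
and summing over the surviving characters, the central term $j=0$ contributes $\binom{2k}{k}\cdot\tfrac12 p^k(p-3)=\binom{2k-1}{k}p^{k+1}+O(p^k)$, which is exactly the asserted main term.

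Everything then reduces to showing that each oscillatory contribution is negligible, i.e. that $\sum_{\rho}\bigl(\tau(\rho)/\tau(\rho\psi)\bigr)^{2j}=o(p)$ for $1\le j\le k$, the equidistribution of the angles $\theta_\rho$ on the circle. By the Hasse--Davenport product relation, $\tau(\rho)\tau(\rho\psi)$ equals $\tau(\rho^2)\tau(\psi)$ up to an explicit root of unity in $\rho(4)$, so $\bigl(\tau(\rho)/\tau(\rho\psi)\bigr)^2$ is, up to such a factor, $J(\rho,\rho)^2/p$ with $J(\rho,\rho)=\sum_x\rho(x-x^2)$ and $|J(\rho,\rho)|=\sqrt p$. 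The oscillatory sum therefore becomes $\psi(-1)^{-j}p^{-j}\sum_{\rho}\rho(16^{\,j})\,J(\rho,\rho)^{2j}$, and since $|J(\rho,\rho)^{2j}|=p^{j}$ for each of the $\sim p$ characters, the trivial bound is $O(p)$; the required $o(p)$ is precisely a cancellation statement, namely that $\rho(16^{\,j})\bigl(J(\rho,\rho)/\sqrt p\bigr)^{2j}$ averages to zero over $\rho$ --- the vertical equidistribution of the Jacobi-sum angles.

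The hard part will be exactly this cancellation for all $j\le k$. Expanding $J(\rho,\rho)^{2j}$ and reapplying orthogonality rewrites $\sum_{\rho}\rho(16^{\,j})J(\rho,\rho)^{2j}$ as $(p-1)N_{2j}-(p-2)^{2j}$, where $N_{2j}$ counts points on the hypersurface $\prod_{i=1}^{2j}(x_i-x_i^2)=16^{-j}$; the leading $p^{2j}$ cancels, and the target is an error $o(p^{\,j+1})$, i.e. $N_{2j}$ pinned down to accuracy $o(p^{\,j})$ beyond the value $(p-2)^{2j}/(p-1)$. For $j=1$ this is a curve of bounded genus and Weil's bound gives the saving $O(\sqrt p)$ --- the regime in which ``previous efforts stop at Weil's result on curves.'' For $j\ge 2$, however, the product-of-Jacobi-sums expression yields only the trivial size $O(p^{\,j+1})$, so a genuine power saving is needed: one must compute the geometric monodromy of the sheaf attached to $x\mapsto x-x^2$ (equivalently, establish the Sato--Tate-type equidistribution of $J(\rho,\rho)/\sqrt p$ with its twists $\rho(16^{\,j})$) and show that the middle-dimensional cohomology does not survive. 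Carrying out this monodromy/cohomology analysis uniformly in $k$ is the central obstacle and is what keeps the statement conjectural for large $k$.
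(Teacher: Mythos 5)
You are attempting a statement that the paper itself does not prove: this is Conjecture 1.10, which the authors pose as open, establishing only the case $k=2$ (Theorem 1.6, with error term $O(p^{5/2}\ln p)$). Your attempt, by your own admission, does not prove it either, so the honest verdict is that there is a genuine gap --- but it is exactly the gap the paper's authors also could not close. Your reductions up to that point are correct: the count of solutions of $a+\overline{a}=t$ gives $S(\chi)=\sum_t\chi(t)\psi(t^2-4)$ with $\psi(\cdot)=\left(\frac{\cdot}{p}\right)$ (this is the paper's Lemma 2.8 with $m=1$), the vanishing for odd $\chi$ and the count $(p-3)/2$ of surviving characters are right, the factorization $S(\rho^2)=\psi(-1)\rho(4)\bigl(J(\rho,\psi)+J(\rho\psi,\psi)\bigr)$ checks out, the ratio of the two Jacobi sums is indeed $\bigl(\tau(\rho)/\tau(\rho\psi)\bigr)^{2}$ of modulus one (and in fact there are no degenerate even non-principal characters: $\chi\neq\chi_0$ forces $\rho\notin\{\chi_0,\psi\}$, so the identity $|S(\chi)|^{2k}=p^k\sum_{j=-k}^{k}\binom{2k}{k+j}e^{ij\theta_\rho}$ is exact), and the $j=0$ term yields $\binom{2k}{k}p^k\cdot\frac{p-3}{2}=\binom{2k-1}{k}p^{k+1}+O(p^k)$, the asserted main term. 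Likewise the identity $\sum_{\rho\neq\chi_0}\rho(16^j)J(\rho,\rho)^{2j}=(p-1)N_{2j}-(p-2)^{2j}$ and the $j=1$ treatment via Weil on the curve $(x_1-x_1^2)(x_2-x_2^2)=16^{-1}$ are correct. But for $2\le j\le k$ the required cancellation $\sum_\rho\bigl(\tau(\rho)/\tau(\rho\psi)\bigr)^{2j}=o(p)$ is only named as ``the central obstacle,'' not established; without it you have reduced one open statement to another, not proved anything.

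Two remarks on the comparison with the paper. First, your route is genuinely different from the paper's framework: via Lemmas 2.3 and 2.8 one has $|S(\chi)|^{2}=\bigl(|G(1,\chi;p)|^{2}-2p\bigr)^{2}/p$ for even non-principal $\chi$, so Conjecture 1.10 is essentially the even-character instance of Conjecture 1.8 with Gauss-sum moments up to order $4k$; since $\sum_\chi|G(n,\chi;p)|^{2m}$ is known only for $m\le5$, that route reaches only $k\le2$, which is why Theorem 1.6 stops at the fourth power. Your Jacobi-sum-angle approach is cleaner in that it makes the binomial main term structural and isolates the analytic content as a single equidistribution statement. Second, your reduction can be streamlined: the detour through Hasse--Davenport and the twist $\rho(16^{j})$ is unnecessary, because $J(\rho,\psi)=\tau(\rho)\tau(\psi)/\tau(\rho\psi)$ and $\tau(\psi)^2=\psi(-1)p$ give directly $\bigl(\tau(\rho)/\tau(\rho\psi)\bigr)^{2}=\psi(-1)^{-1}J(\rho,\psi)^{2}/p$; equivalently, your twist cancels identically since $J(\rho,\rho)=\overline{\rho}(4)J(\rho,\psi)$. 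The missing input is therefore the pure moment bound $\sum_{\rho}J(\rho,\psi)^{2j}=o(p^{j+1})$, i.e.\ vertical equidistribution of the angles of $J(\rho,\psi)$, which by orthogonality is the toric character sum bound $\sum_{v_1\cdots v_{2j}=1}\psi\bigl((1-v_1)\cdots(1-v_{2j})\bigr)=o(p^{j})$. This is exactly the sort of statement treated by Katz's monodromy/equidistribution machinery for multiplicative character sums (finite-field Mellin transforms), so your ``hard part'' may well be accessible to known technology --- but neither you nor the paper carries out the monodromy computation, and until someone does, the statement remains what the paper says it is: a conjecture.
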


\begin{conjecture}\label{MT1}  Let $p$ be a prime large enough. Then for any positive integer $k$, we have the asymptotic formula
\begin{eqnarray*}
\frac{1}{p^{k+1}}\cdot \mathop{\sum_{\chi\bmod p}}_
{\chi\neq\chi_0}\left|\sum_{a=1}^{p-1}\chi\left(a+\overline{a}\right) \right|^{2k}\cdot |L(1,\chi)|= \binom{2k-1}{k}\cdot C+ o\left(1\right).
\end{eqnarray*}
\end{conjecture}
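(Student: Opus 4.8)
\noindent\emph{Sketch of a proof strategy.} The statement is the $|L(1,\chi)|$-weighted form of Conjecture~1.10, and its case $k=2$ is exactly Theorem~1.7; accordingly my plan is to feed the unweighted $2k$-th moment asymptotic (Conjecture~1.10) into the same $L$-value averaging mechanism that produces the constant $C$ in Conjecture~\ref{C1}. Write $S(\chi)=\sum_{a=1}^{p-1}\chi(a+\overline a)$. The first step is a structural reduction of $S(\chi)$: the substitution $a\mapsto -a$ gives $S(\chi)=\chi(-1)S(\chi)$, so $S(\chi)=0$ unless $\chi$ is even, while for even $\chi$ the point count $\#\{a:\ a+\overline a=t\}=1+\left(\frac{t^2-4}{p}\right)$ together with $\sum_{t}\chi(t)=0$ for $\chi\ne\chi_0$ yields the clean identity $S(\chi)=\sum_{t}\chi(t)\left(\frac{t^2-4}{p}\right)$ (the $t=0$ term drops since $\chi(0)=0$). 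Hence only the even non-principal characters enter, exactly as recorded in Theorem~1.6.

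Using this identity I would expand
\[
|S(\chi)|^{2k}=\sum_{t_1,\dots,t_k}\sum_{s_1,\dots,s_k}\chi\!\left(t_1\cdots t_k\,\overline{s_1\cdots s_k}\right)\prod_{i=1}^{k}\left(\frac{t_i^2-4}{p}\right)\left(\frac{s_i^2-4}{p}\right),
\]
multiply by $|L(1,\chi)|$, and sum over the even non-principal $\chi$. Orthogonality over the even characters splits the sum into a diagonal part, supported on $t_1\cdots t_k\equiv\pm\,s_1\cdots s_k\pmod p$, and an off-diagonal remainder. For the \emph{unweighted} average the Legendre-weighted diagonal count is precisely the quantity that Conjecture~1.10 asserts to equal $\binom{2k-1}{k}p^{k+1}\bigl(1+o(1)\bigr)$ --- the same central-binomial main term that governs $\sum_{\chi}|G(n,\chi;p)|^{2k}$ in Conjecture~1.8, reflecting that both families equidistribute like sums of independent phases.

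The role of the weight $|L(1,\chi)|$ is to multiply this main term by $C$, and the whole content of $C$ is that it is the limiting average of $|L(1,\chi)|$ over the pertinent characters. Heuristically $L(1,\chi)=\prod_{\ell}\bigl(1-\chi(\ell)/\ell\bigr)^{-1}$, so averaging $|L(1,\chi)|$ behaves like averaging $\prod_{\ell}\bigl|1-e^{i\theta_\ell}/\ell\bigr|^{-1}$ over independent uniform phases, and the identity
\[
\frac{1}{2\pi}\int_{0}^{2\pi}\bigl(1-2r\cos\theta+r^{2}\bigr)^{-1/2}\,d\theta=\sum_{j\ge0}\frac{\binom{2j}{j}^{2}}{16^{\,j}}\,r^{2j},\qquad r=\tfrac{1}{\ell},
\]
reproduces exactly the local factor $\sum_{j\ge0}\frac{\binom{2j}{j}^{2}}{4^{2j}\ell^{2j}}$ of $C$ in \eqref{constant-c}. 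I would make this rigorous by the hybrid mean-value method already validated for the Gauss-sum moments --- the proofs of Conjecture~\ref{C1} for $m\le4$ in \cite{zhang,yuan,BB} and its $m=5$ case established here: truncate $L(1,\chi)$ through the Polya--Vinogradov inequality, pair the surviving character arguments against the diagonal of the $|S(\chi)|^{2k}$ expansion, and show the joint average factorizes as $C\cdot\binom{2k-1}{k}p^{k+1}$, the off-diagonal and tail contributions being absorbed via $\sum_{\chi}|L(1,\chi)|^{2}\ll p$; for $k=2$ this recovers the error $O\bigl(p^{5/2}\ln^{2}p\bigr)$ of Theorem~1.7, and in general a power-saving remainder of the shape $O\bigl(p^{k+\frac12}\ln^{2}p\bigr)$.

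The decisive obstacle is not the $L$-weight, whose averaging mechanism is uniform in $k$, but the unweighted asymptotic of Conjecture~1.10 for general $k$. Extracting the constant $\binom{2k-1}{k}$ requires evaluating the $2k$-dimensional character sum $\sum\prod_{i}\left(\frac{t_i^2-4}{p}\right)\left(\frac{s_i^2-4}{p}\right)$ on the diagonal locus, i.e.\ counting points on a variety whose dimension grows with $k$. As the paper emphasizes, Weil's estimate for curves --- the ceiling of the earlier arguments --- no longer isolates the main term, and one must instead use the cohomological $\ell$-adic input in the spirit of the $O(p^2)$ estimate of Theorem~1.2 to obtain a genuine power saving. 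This is exactly the ingredient that is currently available only for $k\le5$, which is why the statement remains conjectural in general.
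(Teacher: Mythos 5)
You are addressing a statement that the paper itself leaves unproved: it is Conjecture 1.11, stated as open, with Theorem 1.7 supplying only its $k=2$ instance. You recognize this honestly, and the structural skeleton of your sketch is consistent with the paper's machinery: the parity reduction $S(\chi)=\chi(-1)S(\chi)$ and the point-count identity $S(\chi)=\sum_{t}\chi(t)\left(\frac{t^2-4}{p}\right)$ are exactly the content of Lemma 2.8 with $m=1$ (up to the substitution $t\mapsto 2t$ that the paper performs), and your integral identity does reproduce the local factors $\sum_{j\ge0}\binom{2j}{j}^{2}4^{-2j}\ell^{-2j}$ of the constant $C$ in \eqref{constant-c}. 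So as a statement of strategy and of where the difficulty lies, the proposal is reasonable. But it is not a proof, and two of its load-bearing claims are materially off.

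First, the reduction is conditional on Conjecture 1.10, which is open, and your assertion that the needed input is ``available only for $k\le 5$'' mis-scopes the known results: the $m\le5$ moment evaluations concern the Gauss sums $G(n,\chi;p)$, not $S(\chi)$. By Lemma 2.8 together with Lemma 2.3 one has, for even non-principal $\chi$, $|S(\chi)|^{2}=\bigl(|G(1,\chi;p)|^{2}-2p\bigr)^{2}/p$, so the $2k$-th moment of $S(\chi)$ expands into Gauss-sum moments of order up to $4k$; with moments known only through order $10$, this covers $k\le 2$ of Conjectures 1.10/1.11 --- which is precisely why the paper proves only Theorems 1.6 and 1.7. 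Second, even granting Conjecture 1.10, inserting the weight $|L(1,\chi)|$ is not a formal corollary of the unweighted asymptotic. The mechanism the paper invokes (via the estimate $\sum_{a=1}^{p-1}\bigl|\sum_{\chi\neq\chi_0}\chi(a)\,|L(1,\chi)|\bigr|\ll p\ln p$, reference [10], and Lemma 2.3) requires \emph{twisted} moments $\sum_{\chi}\chi(u)\,|S(\chi)|^{2k}$ controlled with a power saving uniformly in $u$, i.e., the full multidimensional character-sum evaluation on every coset of the diagonal, not merely the diagonal main term. Your proposed absorption of errors ``via $\sum_{\chi}|L(1,\chi)|^{2}\ll p$'' cannot supply this: Cauchy--Schwarz against the $4k$-th moment returns a bound of order $p^{k+1}$, the same size as the main term, so it neither isolates the factor $C$ nor yields your claimed remainder $O\bigl(p^{k+\frac12}\ln^{2}p\bigr)$. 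The phase-independence heuristic for $C$ is good motivation but not an argument; with these two gaps the proposal correctly maps the terrain of the conjecture without closing it, which matches its status in the paper as open for $k\ge 3$.
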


\section{Some lemmas }
For convenience, throughout the paper we denote $A=1+\chi(-1)$ and $B=\left(\frac{n}{p}\right)G(1;p)$,
where $G(1;p)$ is the Gauss sum $\displaystyle G(1;p)=\sum_{b=0}^{p-1}e\left(\frac{b^2}{p}\right)$ and $\left(\frac{\bullet}{p}\right)$ is the Legendre symbol. We need the following lemmas to prove our main result.
\begin{lemma}[Lemma 1 \cite{yuan}]\label{X7}
 Let $p$ be an odd prime. Then
  \begin{align*}\label{X7}
 \displaystyle \sum_{a=2}^{p-2}\left(\frac{a^2-1}{p}\right)=\begin{cases}
                                                -2, &p\equiv 1\bmod 4;\\
                                                0, &p\equiv 3\bmod 4.
                                               \end{cases}
\end{align*}
\end{lemma}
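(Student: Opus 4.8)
The plan is to reduce the truncated sum to a \emph{complete} quadratic character sum, which has a clean classical value, and then to account for the finitely many omitted terms. First I would introduce
\[
S=\sum_{a=2}^{p-2}\left(\frac{a^2-1}{p}\right),\qquad
S^{\ast}=\sum_{a=0}^{p-1}\left(\frac{a^2-1}{p}\right),
\]
and examine the three boundary indices $a=0,\,1,\,p-1$ that separate them. The terms $a=1$ and $a=p-1$ both contribute $\left(\frac{0}{p}\right)=0$, since $(p-1)^2\equiv 1\pmod p$, while the term $a=0$ contributes $\left(\frac{-1}{p}\right)$. Hence $S=S^{\ast}-\left(\frac{-1}{p}\right)$, and the whole problem is reduced to evaluating $S^{\ast}$.

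Next I would compute $S^{\ast}$ by point-counting on the conic $a^2-y^2=1$ over $\mathbb{F}_p$. Factoring as $(a-y)(a+y)=1$ and setting $u=a-y$, $v=a+y$, each nonzero $u$ determines $v=u^{-1}$ and then recovers $a=\tfrac12(u+u^{-1})$, $y=\tfrac12(u^{-1}-u)$ (legitimate since $p$ is odd); this is a bijection, so the conic has exactly $p-1$ points. On the other hand, for each $a$ the number of solutions $y$ of $y^2=a^2-1$ equals $1+\left(\frac{a^2-1}{p}\right)$ with the convention $\left(\frac{0}{p}\right)=0$, so the total point count is $p+S^{\ast}$. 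Equating the two counts gives $S^{\ast}=-1$. (Alternatively one may simply invoke the standard formula $\sum_{x}\left(\frac{x^2+bx+c}{p}\right)=-1$ valid whenever $b^2-4c\not\equiv 0$, applied here with $b=0$, $c=-1$, $b^2-4c=4$.)

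Combining the two steps yields $S=-1-\left(\frac{-1}{p}\right)$. Finally I would substitute $\left(\frac{-1}{p}\right)=(-1)^{(p-1)/2}$, which equals $+1$ when $p\equiv 1\pmod 4$ and $-1$ when $p\equiv 3\pmod 4$, producing the values $-2$ and $0$ respectively, exactly as stated.

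Since every step is elementary, there is no genuine obstacle; the only place demanding care is the bookkeeping of the boundary terms—verifying that $a=1$ and $a=p-1$ give zero and so may be discarded freely, while $a=0$ is the unique nonzero boundary contribution—together with the correct parity evaluation of $\left(\frac{-1}{p}\right)$, which is precisely what splits the two congruence classes of $p$ modulo $4$.
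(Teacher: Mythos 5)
Your proof is correct in every detail: the boundary bookkeeping is right (the terms $a=1$ and $a=p-1$ vanish since $\left(\frac{0}{p}\right)=0$, while $a=0$ contributes $\left(\frac{-1}{p}\right)$), the evaluation $S^{\ast}=-1$ via point-counting on the conic $a^2-y^2=1$ (or via the classical formula $\sum_{x}\left(\frac{x^2+bx+c}{p}\right)=-1$ for nonzero discriminant) is sound, and Euler's criterion correctly splits the two congruence classes. Note that the paper itself offers no proof of this statement: it is quoted verbatim as Lemma 1 of He and Liao \cite{yuan}, so there is no in-paper argument to compare against. Your argument is essentially the standard one that He and Liao's lemma rests on — completing the sum and invoking the complete quadratic character sum — so what your write-up buys is self-containedness: the reader need not consult \cite{yuan}. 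One small remark: the parity structure can be seen even faster by substituting $a\mapsto\overline{a}$, which gives $\left(\frac{\overline{a}^2-1}{p}\right)=\left(\frac{-1}{p}\right)\left(\frac{a^2-1}{p}\right)$, so for $p\equiv 3\bmod 4$ the sum equals its own negative and is $0$ without any computation (this is the same symmetry invoked in Remark 1.3 of the paper for the four-variable sum); your uniform computation handles both cases at once, which is arguably cleaner.
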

\begin{lemma}[Theorem 1 \cite{yuan}]\label{X6}
 Let $p$ be an odd prime. Then
 \begin{align*}
 \displaystyle \sum_{a=2}^{p-2}\sum_{b=1}^{p-1}\left(\frac{a^2-b^2}{p}\right)\left(\frac{b^2-1}{p}\right)=\begin{cases}
                                                                                              10-2p, &p\equiv 1\bmod4;\\
                                                                                              2p-6,&p\equiv 3\bmod 4.
                                                                                             \end{cases}
\end{align*}
\end{lemma}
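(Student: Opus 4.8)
The plan is to evaluate the sum by carrying out the summation over $a$ first, for each fixed $b$, so that the inner sum becomes a complete quadratic character sum to which the standard evaluation formula applies. Writing $\left(\frac{\cdot}{p}\right)$ for the Legendre symbol, for fixed $b\in\{1,\dots,p-1\}$ I would extend the range of $a$ to all residues modulo $p$, the difference being exactly the terms $a=0,\,1,\,p-1$:
\[
\sum_{a=2}^{p-2}\left(\frac{a^2-b^2}{p}\right)=\sum_{a=0}^{p-1}\left(\frac{a^2-b^2}{p}\right)-\left(\frac{-b^2}{p}\right)-2\left(\frac{1-b^2}{p}\right),
\]
where $a=1$ and $a=p-1$ contribute equally since $(p-1)^2\equiv1$. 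As a quadratic in $a$ the polynomial $a^2-b^2$ has discriminant $4b^2\not\equiv0$, so the standard evaluation $\sum_{x}\left(\frac{\alpha x^2+\beta x+\gamma}{p}\right)=-\left(\frac{\alpha}{p}\right)$ for nonzero discriminant gives $-1$ for the complete sum. Hence, using $\left(\frac{-b^2}{p}\right)=\left(\frac{-1}{p}\right)$,
\[
\sum_{a=2}^{p-2}\left(\frac{a^2-b^2}{p}\right)=-1-\left(\frac{-1}{p}\right)-2\left(\frac{1-b^2}{p}\right).
\]

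Next I would substitute this into the original sum and split over $b$ into two pieces. The first is $-\bigl(1+\left(\frac{-1}{p}\right)\bigr)\sum_{b=1}^{p-1}\left(\frac{b^2-1}{p}\right)$; the single sum here equals $-1-\left(\frac{-1}{p}\right)$, which I would obtain from the same quadratic-sum formula or, equivalently, from Lemma \ref{X7} after noting that the terms $b=1,\,p-1$ vanish. The second piece is $-2\sum_{b=1}^{p-1}\left(\frac{b^2-1}{p}\right)\left(\frac{1-b^2}{p}\right)$, and the key observation is that the product of Legendre symbols equals $\left(\frac{-(b^2-1)^2}{p}\right)=\left(\frac{-1}{p}\right)$ whenever $b^2\neq1$ and vanishes at $b=\pm1$; this piece is therefore $-2\left(\frac{-1}{p}\right)(p-3)$.

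Combining the two pieces yields
\[
\sum_{a=2}^{p-2}\sum_{b=1}^{p-1}\left(\frac{a^2-b^2}{p}\right)\left(\frac{b^2-1}{p}\right)=\left(1+\left(\tfrac{-1}{p}\right)\right)^{2}-2\left(\tfrac{-1}{p}\right)(p-3),
\]
and splitting into residue classes finishes the proof: for $p\equiv1\bmod4$ one has $\left(\frac{-1}{p}\right)=1$, giving $4-2(p-3)=10-2p$, while for $p\equiv3\bmod4$ one has $\left(\frac{-1}{p}\right)=-1$, giving $0+2(p-3)=2p-6$, exactly the two claimed values.

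There is no serious obstacle in this argument: everything reduces to the elementary formula for $\sum_x\left(\frac{\text{quadratic}}{p}\right)$, so no appeal to Weil's bound is needed. The only point requiring care is the boundary bookkeeping — correctly identifying that completing the $a$-range costs precisely $a\in\{0,1,p-1\}$ with $a=1$ and $a=p-1$ equal, that the degenerate points $b=\pm1$ drop out of the second piece, and that the factor $\left(\frac{-1}{p}\right)$ is tracked consistently so that the two residue classes separate as stated.
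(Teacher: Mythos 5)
Your proof is correct, and every step checks out: completing the $a$-range costs exactly the terms $a\in\{0,1,p-1\}$ (with $a=1$ and $a=p-1$ contributing equally); the standard evaluation $\sum_{x=0}^{p-1}\left(\frac{x^2+\gamma}{p}\right)=-1$ for $p\nmid \gamma$ applies because the discriminant $4b^2$ is nonzero for $1\le b\le p-1$; the single sum $\sum_{b=1}^{p-1}\left(\frac{b^2-1}{p}\right)=-1-\left(\frac{-1}{p}\right)$ agrees with Lemma \ref{X7} once one notes the vanishing endpoints $b=1,p-1$; and the product identity
\begin{align*}
\left(\frac{1-b^2}{p}\right)\left(\frac{b^2-1}{p}\right)=\left(\frac{-(b^2-1)^2}{p}\right)=\left(\frac{-1}{p}\right)\qquad (b\not\equiv\pm1\bmod p)
\end{align*}
together with the count of $p-3$ such residues gives your closed form $\left(1+\left(\frac{-1}{p}\right)\right)^2-2\left(\frac{-1}{p}\right)(p-3)$, which indeed specializes to $10-2p$ when $p\equiv1\bmod4$ and $2p-6$ when $p\equiv3\bmod4$.

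One point of comparison is worth making explicit: the paper contains no proof of this statement at all --- it is imported verbatim as Theorem 1 of He and Liao \cite{yuan} and then used as a black box in the evaluations of $N_3$ and $N_5$ in the proof of Theorem 1.4. Your argument is therefore not a variant of anything in the paper but a self-contained elementary derivation, resting only on the classical formula for a complete quadratic character sum and careful boundary bookkeeping, with no appeal to Gauss sums, orthogonality of characters, or Weil-type estimates. That makes it a clean drop-in replacement for the citation, and it is shorter than routing through the Gauss-sum machinery that \cite{yuan} develops for its broader purposes.
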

\begin{lemma}[Lemma 2 \cite{yuan}]\label{X2}
 Let $p$ be an odd prime and $n$ be any integer with $\gcd(n,p)=1$. Then for any non-principal character  $\chi$ modulo $p$ the following identity holds
 \begin{equation*}
  |G(n,\chi;p)|^2=Ap+B\sum_{a=2}^{p-2}\chi(a)\left(\frac{a^2-1}{p}\right).
 \end{equation*}
 If $\chi_0$ is the principal character modulo $p$, then
 \begin{align*}
 |G(n,\chi_0;p)|^2=\begin{cases}
 p+1-2\sqrt{p}\left(\frac{n}{p}\right), &p\equiv 1\bmod 4;\\
 p+1, &p\equiv 3\bmod 4.
 \end{cases}
 \end{align*}
\end{lemma}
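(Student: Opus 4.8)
The plan is to expand the squared modulus into a double character sum, decouple the two summation variables by a multiplicative substitution, and reduce the resulting inner sum to a classical quadratic Gauss sum.

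First, for a non-principal $\chi$ I would write $|G(n,\chi;p)|^2 = G(n,\chi;p)\overline{G(n,\chi;p)} = \sum_{a=1}^{p-1}\sum_{b=1}^{p-1}\chi(a)\overline{\chi(b)}\,e\left(\frac{n(a^2-b^2)}{p}\right)$, where the ranges start at $1$ because $\chi$ annihilates the terms with $a\equiv 0$ or $b\equiv 0$. The decisive step is the substitution $a\equiv bt\pmod p$: for each fixed $b$ the variable $t$ runs over $1,\dots,p-1$ as $a$ does, multiplicativity gives $\chi(a)\overline{\chi(b)}=\chi(t)$, and $a^2-b^2=b^2(t^2-1)$. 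This separates the variables and yields $|G(n,\chi;p)|^2=\sum_{t=1}^{p-1}\chi(t)\sum_{b=1}^{p-1}e\left(\frac{nb^2(t^2-1)}{p}\right)$.

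Next I would evaluate the inner sum by completing it to a full Gauss sum, namely $\sum_{b=1}^{p-1}e(nb^2(t^2-1)/p)=G(n(t^2-1);p)-1$, and separate the two degenerate values $t\equiv\pm1$, for which $t^2-1\equiv 0$ and the inner sum equals $p-1$. For the remaining $t$ (that is, $t=2,\dots,p-2$) I would apply the standard identity $G(m;p)=\left(\frac{m}{p}\right)G(1;p)$ for $\gcd(m,p)=1$ together with the factorization $\left(\frac{n(t^2-1)}{p}\right)=\left(\frac{n}{p}\right)\left(\frac{t^2-1}{p}\right)$, so that the inner sum becomes $\left(\frac{t^2-1}{p}\right)B-1$, where $B=\left(\frac{n}{p}\right)G(1;p)$.

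Collecting the three pieces is then pure bookkeeping, and I expect this to be the only place where a sign or indexing slip could occur. The boundary terms $t\equiv\pm1$ contribute $(p-1)\bigl(\chi(1)+\chi(-1)\bigr)=(p-1)A$; the main term over $t=2,\dots,p-2$ contributes $B\sum_{t=2}^{p-2}\chi(t)\left(\frac{t^2-1}{p}\right)$; and the leftover $-\sum_{t=2}^{p-2}\chi(t)$ equals $+A$, since $\sum_{t=1}^{p-1}\chi(t)=0$ forces $\sum_{t=2}^{p-2}\chi(t)=-\chi(1)-\chi(-1)=-A$. Adding $(p-1)A$ and $A$ consolidates to $Ap$, which gives exactly the claimed identity. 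Finally, for the principal character the sum collapses directly to $G(n,\chi_0;p)=\sum_{a=1}^{p-1}e(na^2/p)=G(n;p)-1=\left(\frac{n}{p}\right)G(1;p)-1$; substituting the classical evaluation $G(1;p)=\sqrt p$ when $p\equiv1\pmod4$ and $G(1;p)=i\sqrt p$ when $p\equiv3\pmod4$ and taking the squared modulus produces the two stated cases, the purely imaginary value making the linear-in-$\sqrt p$ cross term vanish when $p\equiv3\pmod 4$.
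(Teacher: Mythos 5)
Your proof is correct: the substitution $a\equiv bt\pmod p$, the completion of the inner sum to $G(n(t^2-1);p)$, the identity $G(m;p)=\left(\frac{m}{p}\right)G(1;p)$, and the bookkeeping $A(p-1)+A=Ap$ (using $\sum_{t=2}^{p-2}\chi(t)=-A$ for non-principal $\chi$) all check out, as does the principal-character case via $G(n,\chi_0;p)=\left(\frac{n}{p}\right)G(1;p)-1$ and Gauss's evaluation. The paper itself states this lemma without proof, citing Lemma 2 of He and Liao, and your argument is essentially the standard one given there, so there is nothing to flag.
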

\begin{lemma}[Lemma 3 \cite{yuan}]\label{X3}
Let $p$ be an odd prime, and let $\chi_0$ be the principal character modulo $p$. Then for any positive integer $m$, we have
\begin{align*}
\displaystyle\sum_{\chi\neq\chi_0}(1+\chi(-1))^m=\begin{cases}\displaystyle
(p-3)\sum_{i=0}^{\frac{m}{2}}\binom{m}{2i}, \text{ if $m$ is an even integer;} \\
\displaystyle(p-3)\sum_{i=0}^{\frac{m-1}{2}}\binom{m}{2i}, \text{ if $m$ is an odd integer},
\end{cases}
\end{align*}
here $\binom{m}{2i}=\frac{m!}{(2i)!\cdot(m-2i)!}$.
\end{lemma}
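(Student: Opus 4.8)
The plan is to exploit the elementary fact that for any Dirichlet character $\chi$ modulo $p$ the value $\chi(-1)$ is a square root of unity, hence $\chi(-1)\in\{1,-1\}$. Accordingly the summand $(1+\chi(-1))^m$ takes only two values: it equals $2^m$ when $\chi$ is even (that is, $\chi(-1)=1$), and it equals $(1+(-1))^m=0^m=0$ when $\chi$ is odd, for every positive integer $m$. So the entire sum collapses to $2^m$ times the number of non-principal even characters modulo $p$, and the whole problem reduces to a counting problem together with one binomial identity.

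First I would count the even characters. The even characters are exactly those trivial on the order-two subgroup $\{\pm1\}\subset(\mathbb{Z}/p\mathbb{Z})^*$; since $(\mathbb{Z}/p\mathbb{Z})^*$ is cyclic of even order $p-1$, there are precisely $(p-1)/2$ of them. Discarding the principal character $\chi_0$, which is even, leaves $(p-3)/2$ non-principal even characters. Hence the sum equals $\tfrac{p-3}{2}\cdot 2^m=(p-3)\cdot 2^{m-1}$, independently of the parity of $m$.

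It then remains to reconcile this closed form with the stated expression by invoking the identity $\sum_{i}\binom{m}{2i}=2^{m-1}$ valid for $m\geq1$, which follows at once from adding $(1+1)^m=2^m$ and $(1-1)^m=0$. The upper limit of the inner sum is $m/2$ or $(m-1)/2$ according as $m$ is even or odd, which is exactly the case distinction recorded in the statement; in either case the inner sum is $2^{m-1}$, so $(p-3)\sum_{i}\binom{m}{2i}=(p-3)\cdot 2^{m-1}$, matching the value just computed.

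A slightly more transparent variant, which I might prefer to write up, is to expand $(1+\chi(-1))^m=\sum_{j=0}^m\binom{m}{j}\chi(-1)^j$ and interchange summations. The inner sum $\sum_{\chi\neq\chi_0}\chi(-1)^j=\sum_{\chi\neq\chi_0}\chi\bigl((-1)^j\bigr)$ splits by the parity of $j$: for even $j$ it gives $\sum_{\chi\neq\chi_0}1=p-2$, while for odd $j$ the orthogonality relation $\sum_{\chi}\chi(-1)=0$ (valid because $-1\not\equiv1\bmod p$) yields $\sum_{\chi\neq\chi_0}\chi(-1)=-1$. Collecting the even and odd contributions gives $(p-2)2^{m-1}-2^{m-1}=(p-3)2^{m-1}$, in agreement. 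There is no genuine obstacle in this lemma; the only points needing a little care are the counting of even characters (equivalently, the orthogonality evaluation) and the bookkeeping of the even-index binomial sum.
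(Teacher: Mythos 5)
Your proof is correct and complete: the dichotomy $\chi(-1)\in\{1,-1\}$, the count of $(p-3)/2$ non-principal even characters, and the even-index binomial identity $\sum_i\binom{m}{2i}=2^{m-1}$ together give $(p-3)\cdot 2^{m-1}$, which is exactly the stated right-hand side in both parity cases (and your orthogonality variant, $(p-2)2^{m-1}-2^{m-1}$, is an equally valid bookkeeping of the same facts). The paper itself offers no proof, simply citing Lemma~3 of \cite{yuan}, and the argument there is essentially this same elementary parity computation, so there is nothing further to reconcile.
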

\begin{lemma}[Lemma 4 \cite{yuan}]\label{X4}
 Let $p$ be an odd prime, and let $\chi_0$ be the principal character modulo $p$. Then for any positive integre $m$, we have
 \begin{align*}
  \sum_{\chi\neq\chi_0}(1+\chi(-1))^m\sum_{a=2}^{p-2}\chi(a)\left(\frac{a^2-1}{p}\right)=\begin{cases}
                                                                                          2^{m+1}, &p\equiv 1\bmod 4;\\
                                                                                          0,  &p\equiv 3\bmod 4.
                                                                                         \end{cases}
 \end{align*}
\end{lemma}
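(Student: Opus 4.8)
The plan is to exploit the hypothesis that $m$ is a positive integer, which makes the factor $(1+\chi(-1))^m$ collapse to a simple indicator of parity. First I would note that $\chi(-1)\in\{+1,-1\}$, so $(1+\chi(-1))^m=2^m$ when $\chi$ is even and $(1+\chi(-1))^m=0$ when $\chi$ is odd (the hypothesis $m\geq 1$ is essential here, since $1+(-1)=0$). This restricts the outer sum to even non-principal characters and pulls out the constant factor $2^m$, reducing the left-hand side to
\begin{align*}
2^m \mathop{\sum_{\chi \text{ even}}}_{\chi\neq\chi_0} \sum_{a=2}^{p-2}\chi(a)\left(\frac{a^2-1}{p}\right).
\end{align*}

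Next I would interchange the order of summation and evaluate the inner character sum by orthogonality. The even characters modulo $p$ are exactly the characters of the quotient group $(\mathbb{Z}/p\mathbb{Z})^\times/\{\pm1\}$, which is cyclic of order $(p-1)/2$; hence $\sum_{\chi \text{ even}}\chi(a)$ equals $(p-1)/2$ if $a\equiv\pm 1\pmod p$ and equals $0$ otherwise. The key observation is that the summation range $2\leq a\leq p-2$ excludes both residues $a\equiv 1$ and $a\equiv -1\equiv p-1$, so the full even-character sum vanishes for every $a$ in this range. Subtracting off the principal character (and noting $\chi_0(a)=1$ since $\gcd(a,p)=1$ throughout the range), I obtain $\sum_{\chi\neq\chi_0,\ \chi \text{ even}}\chi(a)=-1$ for each $a$ in the range.

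Substituting this back collapses the double sum to a single Legendre-symbol sum, so the expression becomes
\begin{align*}
-2^m \sum_{a=2}^{p-2}\left(\frac{a^2-1}{p}\right).
\end{align*}
At this point I would invoke Lemma \ref{X7}, which evaluates $\sum_{a=2}^{p-2}\left(\frac{a^2-1}{p}\right)$ as $-2$ for $p\equiv 1\bmod 4$ and as $0$ for $p\equiv 3\bmod 4$. This yields $-2^m\cdot(-2)=2^{m+1}$ in the first case and $0$ in the second, which is precisely the claimed evaluation.

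I do not anticipate any serious obstacle. The only step demanding genuine care is the orthogonality calculation for the subgroup of even characters, together with the verification that the summation range $2\leq a\leq p-2$ avoids the two exceptional residues $\pm 1$ at which that sum would otherwise fail to vanish. Once that is in place, the result follows by direct substitution into the already-established Lemma \ref{X7}.
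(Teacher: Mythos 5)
Your proof is correct: the parity observation $(1+\chi(-1))^m=2^m$ or $0$, the orthogonality relation $\sum_{\chi\ \mathrm{even}}\chi(a)=\frac{p-1}{2}$ for $a\equiv\pm1\pmod p$ and $0$ otherwise (so that the restricted sum over non-principal even characters equals $-1$ on the range $2\le a\le p-2$), and the final appeal to Lemma \ref{X7} together give exactly the claimed values $2^{m+1}$ and $0$. The paper does not reprove this lemma but quotes it from \cite{yuan}, and your argument is the same standard reduction used there, so there is nothing further to compare.
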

The next important result is due to Gauss.
\begin{lemma}[Section 9.10 \cite{gauss}]\label{X11}
 For any integer $q\geq1$, we have
 \begin{equation*}
  G(1;q)=\frac{1}{2}\sqrt{q}(1+i)(1+e^{\frac{-\pi iq}{2}})=\begin{cases}\sqrt{q} &\ \text{if}\quad  q\equiv1\bmod 4;\\
0 &\ \text{if}\quad  q\equiv2\bmod 4;\\
i\sqrt{q} &\ \text{if}\quad  q\equiv3\bmod 4;\\
(1+i)\sqrt{q}  &\ \text{if}\quad

 q\equiv0\bmod 4.
\end{cases}
  \end{equation*}
\end{lemma}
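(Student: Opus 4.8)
The plan is to evaluate $G(1;q)=\sum_{n=0}^{q-1}e^{2\pi i n^2/q}$ by analytic means, using a Gaussian regularization together with Poisson summation (equivalently, the modular transformation law of the Jacobi theta function). Computing the modulus $|G(1;q)|$ is elementary, so the whole difficulty lies in pinning down the argument, i.e. the root-of-unity factor; Poisson summation produces it directly, with the principal branch of a square root carrying exactly the phase $e^{\pi i/4}$ that appears in the stated closed form $\tfrac12\sqrt q\,(1+i)(1+e^{-\pi i q/2})$.

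First I would introduce, for $\epsilon>0$, the absolutely convergent regularization
\begin{equation*}
S_\epsilon=\sum_{n\in\mathbb Z}e^{2\pi i n^2/q}\,e^{-\pi\epsilon n^2}=\sum_{n\in\mathbb Z}e^{-\pi\alpha n^2},\qquad \alpha=\epsilon-\tfrac{2i}{q},\ \operatorname{Re}\alpha=\epsilon>0.
\end{equation*}
Since $e^{2\pi i n^2/q}$ is periodic in $n$ of period $q$, grouping $n$ by residues modulo $q$ gives $S_\epsilon=\sum_{r=0}^{q-1}e^{2\pi i r^2/q}\sum_{m\in\mathbb Z}e^{-\pi\epsilon(qm+r)^2}$, and a Riemann-sum estimate of the inner sum yields $\sum_{m}e^{-\pi\epsilon(qm+r)^2}\sim\frac{1}{q\sqrt\epsilon}$ uniformly in $r$ as $\epsilon\to0^+$. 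Hence
\begin{equation*}
S_\epsilon\sim\frac{G(1;q)}{q\sqrt\epsilon},\qquad \epsilon\to0^+.\tag{$\star$}
\end{equation*}

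The heart of the argument is to obtain the same asymptotics a second way. Poisson summation applied to $x\mapsto e^{-\pi\alpha x^2}$, whose Fourier transform is $\alpha^{-1/2}e^{-\pi\xi^2/\alpha}$ (principal branch, valid for $\operatorname{Re}\alpha>0$), gives the exact identity $S_\epsilon=\alpha^{-1/2}\sum_{k\in\mathbb Z}e^{-\pi k^2/\alpha}$. As $\epsilon\to0^+$ one has $\alpha\to-2i/q=(2/q)e^{-\pi i/2}$, so the prefactor tends to $\alpha^{-1/2}\to\sqrt{q/2}\,e^{\pi i/4}=\tfrac12\sqrt q\,(1+i)$; choosing this branch correctly is the decisive point. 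For the dual sum, put $\beta=1/\alpha$, so that $\operatorname{Re}\beta\sim\epsilon q^2/4\to0^+$ while $\operatorname{Im}\beta\to q/2$. Splitting $k$ into residues modulo $2$, the limiting phase $e^{-\pi i q k^2/2}$ equals $1$ for even $k$ and $e^{-\pi i q/2}$ for odd $k$, and each sub-sum blows up like $\frac{1}{2\sqrt{\operatorname{Re}\beta}}\sim\frac{1}{q\sqrt\epsilon}$; hence $\sum_{k}e^{-\pi k^2/\alpha}\sim(1+e^{-\pi i q/2})\frac{1}{q\sqrt\epsilon}$. Combining, $S_\epsilon\sim\tfrac12\sqrt q\,(1+i)(1+e^{-\pi i q/2})\cdot\frac{1}{q\sqrt\epsilon}$, and comparison with $(\star)$ after cancelling the common factor $\frac{1}{q\sqrt\epsilon}$ isolates $G(1;q)=\tfrac12\sqrt q\,(1+i)(1+e^{-\pi i q/2})$. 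Evaluating $e^{-\pi i q/2}$ in the four classes $q\bmod 4$ then reproduces the case table.

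I expect the main obstacle to be this double limiting analysis: both $(\star)$ and the Poisson side diverge like $\epsilon^{-1/2}$, so one must rigorously extract the finite ratio, justifying the replacement of the oscillatory Gaussian sums by their leading Riemann-sum asymptotics and, above all, fixing the correct principal branch of $\alpha^{-1/2}$ — it is exactly this branch that encodes the sign of the Gauss sum, the feature that makes the result deeper than the evaluation of $|G(1;q)|$. As alternatives that sidestep the limiting analysis, one could use Mordell's contour-integration argument (integrating $e^{2\pi i z^2/q}/(e^{2\pi i z}-1)$ around a suitable parallelogram and reducing the slanted sides, via the quasi-periodicity of the integrand, to the Fresnel integral $\int_{\mathbb R}e^{2\pi i t^2}\,dt=\tfrac{1+i}{2}$), or Schur's linear-algebra argument (the matrix $V=(e^{2\pi i jk/q})_{0\le j,k\le q-1}$ satisfies $V^4=q^2 I$ and has trace $G(1;q)$, whose value then follows from the eigenvalue multiplicities, determined through $\operatorname{tr}(V^2)$ and the Vandermonde determinant); all three routes lead to the same closed form.
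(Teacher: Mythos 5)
Your proposal is correct, but it takes a genuinely different route from the one the paper relies on: the paper gives no proof of this lemma, citing Section 9.10 of Apostol, where the evaluation is obtained by the residue-theorem argument you mention only as an alternative at the end. There one integrates $e^{\pi i m z^2/n}/(e^{2\pi i z}-1)$ around a parallelogram to prove the Gauss--Mordell reciprocity law $\sum_{r=0}^{n-1}e^{\pi i m r^2/n}=\sqrt{n/m}\,e^{\pi i/4}\sum_{r=0}^{m-1}e^{-\pi i n r^2/m}$ (for $mn$ even), and the stated closed form is precisely the specialization $m=2$, $n=q$, the dual sum then consisting of exactly the two terms $1+e^{-\pi i q/2}$. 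Your regularized Poisson/theta argument is instead the classical Dirichlet route, and its delicate points are handled correctly: the principal-branch bookkeeping $\alpha^{-1/2}\to\sqrt{q/2}\,e^{\pi i/4}=\tfrac{1}{2}\sqrt{q}\,(1+i)$ is right, and the phase replacement $e^{-\pi i k^2\operatorname{Im}\beta}\to e^{-\pi i k^2 q/2}$ is legitimate since $\operatorname{Im}\beta-q/2=O(q^3\epsilon^2)$ while the Gaussian weight confines the dual sum to $|k|\lesssim(q\sqrt{\epsilon})^{-1}$, so the accumulated phase error is $O(\sqrt{\epsilon})$, negligible against the main term of size $(q\sqrt{\epsilon})^{-1}$; the even/odd split of $k$ (using $k^2\equiv 1\bmod 8$ for odd $k$) then produces the factor $1+e^{-\pi i q/2}$. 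One small repair you should make: state both asymptotics additively, e.g. $S_\epsilon=\frac{G(1;q)}{q\sqrt{\epsilon}}+o(\epsilon^{-1/2})$ and likewise on the Poisson side, rather than with ``$\sim$'', because in the class $q\equiv 2\bmod 4$ both $G(1;q)$ and $1+e^{-\pi i q/2}$ vanish and the relation ``$\sim$'' is then vacuous; with additive errors, multiplying by $q\sqrt{\epsilon}$ and letting $\epsilon\to 0^+$ yields the identity uniformly in all four residue classes. As for what each method buys: Apostol's contour proof avoids the doubly divergent limiting analysis altogether (the slanted sides reduce to the Fresnel integral, which carries the phase $e^{\pi i/4}$) and delivers the full reciprocity law as a bonus, whereas your route uses Poisson summation only for honest Schwartz functions and makes vividly explicit that the ``sign'' of the Gauss sum is nothing but the boundary value of the principal branch of $\alpha^{-1/2}$ on the right half-plane.
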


%\begin{lemma}\cite[(6)]{BB}\label{lem-T} Let $p$ be an odd prime. Then we have
%	\begin{align*}
%	\sum_{b=1}^{p-1}\sum_{c=2}^{p-2}\sum_{d=1}^{p-1}\left(\frac{b^2-c^2}{p}\right)\left(\frac{b^2-1}{p}\right)\left(\frac{d^2-c^2}{p}\right)\left(\frac{d^2-1}{p}\right)=p^2+O(p^{3/2}).
%	\end{align*}
%\end{lemma}
\begin{lemma}\cite[Theorem 1.4]{BB}\label{sum-2} Let $p$ be an odd prime. Then we have
	\begin{align*}
	\sum_{b=1}^{p-1}\sum_{c=2}^{p-2}\left(\frac{b^2-c^2}{p}\right)\left(\frac{b^2-1}{p}\right)\left(\frac{c^2-1}{p}\right)=O(p).
	\end{align*}
\end{lemma}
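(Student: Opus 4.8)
The plan is to peel off the two quadratic factors one variable at a time, reducing the stated double sum to a one-parameter sum over the Legendre family of elliptic curves, and then to bound that family sum by the Grothendieck--Lefschetz trace formula together with Deligne's purity theorem. Write $\chi(\cdot)=\left(\frac{\cdot}{p}\right)$ and let $\Phi$ denote the double sum to be estimated. Because the summand carries the factors $\chi(b^2-1)$ and $\chi(c^2-1)$, every term with $b\equiv\pm1$ or $c\equiv\pm1$ vanishes, so I may freely enlarge the ranges of $b$ and $c$ to all of $\mathbb F_p$ at the cost of $O(1)$ boundary terms, and similarly discard the diagonal contribution $b\equiv c$.

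First I would carry out the inner sum over $b$ for fixed $c\in\{2,\dots,p-2\}$. Completing the range to $b\in\mathbb F_p$, substituting $u=b^2$ (each nonzero $u$ having $1+\chi(u)$ square roots), and then removing the $b=0$ term gives
$$\sum_{b=1}^{p-1}\chi\bigl((b^2-c^2)(b^2-1)\bigr)=\sum_{u}\chi\bigl((u-c^2)(u-1)\bigr)+\sum_u\chi\bigl(u(u-1)(u-c^2)\bigr)-1.$$
The first sum is a character sum of a quadratic with distinct roots $c^2\neq1$, hence equals $-1$; the second is, by definition, $\phi(c^2)$, where $\phi(\lambda):=\sum_u\chi\bigl(u(u-1)(u-\lambda)\bigr)$ is the standard character sum attached to the Legendre elliptic curve $E_\lambda:y^2=u(u-1)(u-\lambda)$. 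Thus the inner sum equals $\phi(c^2)-2$, and after multiplying by $\chi(c^2-1)$ and summing over $c$, Lemma~\ref{X7} disposes of the constant part, reducing the problem to estimating $\sum_{c}\chi(c^2-1)\phi(c^2)$.

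Next I would linearise the quadratic argument $c^2$: writing $\sum_c f(c^2)=\sum_\lambda(1+\chi(\lambda))f(\lambda)+O(1)$ splits the target into $\Sigma_1=\sum_\lambda\chi(\lambda-1)\phi(\lambda)$ and $\Sigma_2=\sum_\lambda\chi(\lambda(\lambda-1))\phi(\lambda)$. (In fact $\Sigma_1$ can be evaluated elementarily and is $O(1)$; the genuine content lies in $\Sigma_2$, which resists an elementary treatment when $p\equiv1\bmod4$.) Both are of the shape $\sum_\lambda\chi(g(\lambda))\phi(\lambda)$ for a fixed nonzero polynomial $g$. The crucial structural fact is that $\phi(\lambda)=-\operatorname{Tr}(\operatorname{Frob}_\lambda\mid\mathcal F)$, where $\mathcal F=R^1\pi_*\overline{\mathbb Q}_\ell$ is the rank-$2$ lisse sheaf on $U=\mathbb P^1\setminus\{0,1,\infty\}$ attached to the Legendre family $\pi:E\to U$; it is pure of weight $1$, tamely ramified at $0,1,\infty$, and geometrically irreducible since the Legendre family has geometric monodromy group $\mathrm{SL}_2$. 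Consequently $\sum_\lambda\chi(g(\lambda))\phi(\lambda)=-\sum_{\lambda\in U(\mathbb F_p)}\operatorname{Tr}\bigl(\operatorname{Frob}_\lambda\mid\mathcal L_\chi(g)\otimes\mathcal F\bigr)$.

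The final step, which is the main obstacle, is the cohomological estimate. The twist $\mathcal G=\mathcal L_\chi(g)\otimes\mathcal F$ is lisse of rank $2$ on $U$, pure of weight $1$, tame at all three missing points, and still geometrically irreducible, since tensoring an irreducible sheaf by a rank-one sheaf preserves irreducibility. Being geometrically irreducible of rank $2$ it is geometrically nonconstant, so $H^0_c(U_{\overline{\mathbb F}_p},\mathcal G)=H^2_c(U_{\overline{\mathbb F}_p},\mathcal G)=0$, and the Grothendieck--Lefschetz formula leaves only $H^1_c$. By Deligne's Weil~II, $H^1_c$ is mixed of weight $\le 2$, so each of its Frobenius eigenvalues is bounded by $p$; and by the Grothendieck--Ogg--Shafarevich (Euler--Poincar\'e) formula, tameness forces the Swan conductors to vanish, whence $\dim H^1_c(U_{\overline{\mathbb F}_p},\mathcal G)=-\operatorname{rank}(\mathcal G)\cdot\chi_c(U)=-2\cdot(-1)=2$. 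Therefore $\bigl|\sum_\lambda\chi(g(\lambda))\phi(\lambda)\bigr|\le 2p$, and reassembling gives $\Phi=O(p)$. The delicate points to verify carefully are precisely (i) the geometric irreducibility and nonconstancy of $\mathcal G$ after the Kummer twist, which guarantees the vanishing of $H^0_c$ and $H^2_c$ and hence the absence of any main term of size $p^{3/2}$ or larger, and (ii) the tameness at $\infty$, which keeps all Swan conductors zero and makes $\dim H^1_c$ an absolute constant independent of $p$.
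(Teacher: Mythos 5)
Your argument is correct, and it checks out line by line: the completion over $b$ (the $b=0$ term contributes $\chi(c^2)=1$, and the quadratic character sum has distinct roots since $c\neq\pm1$, giving $-1$, so the inner sum is indeed $\phi(c^2)-2$), the disposal of the constant part via Lemma \ref{X7}, the elementary evaluation $\Sigma_1=\left(\frac{-1}{p}\right)=O(1)$, and the cohomological bound $\left|\Sigma_2\right|\leq 2p$ (rank $2$, tame at $0,1,\infty$, Swan conductors zero, $\chi_c(U)=-1$, weight $\leq 2$). But your route differs from the paper's treatment of this statement: the paper gives no proof of the lemma at all --- it is imported wholesale from \cite[Theorem 1.4]{BB}, where the sum (there called $N$) is handled by other means; the authors even remark that earlier efforts on such sums ``stop at using Weil's result on curves''. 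What you have effectively done is re-derive the lemma from machinery this paper builds elsewhere: your $\Sigma_2=\sum_{\lambda}\chi(\lambda^2-\lambda)\phi(\lambda)$ is \emph{literally} the sum the paper bounds in \eqref{TH3} inside its proof of Theorem 1.2, with the same Legendre sheaf $\mathcal F$ twisted by the Kummer sheaf $[t\mapsto t^2-t]^\ast\mathcal L_\rho$, the same dimension count $\dim \mathrm{H}^1_c=2$, and the same weight bound; the only cosmetic difference is that the paper works with $\mathrm{R}^1\pi_!$ on ${\mathbb A}^1$ and kills $\mathrm{H}^2_c$ by inspecting the local monodromy at $0$ (unipotent tensored with a quadratic character), whereas you invoke geometric irreducibility via the $\mathrm{SL}_2$ monodromy of the Legendre family --- both are valid. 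Your route buys self-containedness: the lemma follows from the paper's own Theorem 1.2 toolkit plus a short elementary reduction, so the citation to \cite{BB} could in principle be removed; the citation, in turn, buys a precise asymptotic rather than a bare $O(p)$. Two points you flag are indeed the crux and worth stressing: a naive fiberwise application of Weil's bound would only give $O(p^{3/2})$, so the vanishing of $\mathrm{H}^0_c$ and $\mathrm{H}^2_c$ together with tameness (keeping $\dim\mathrm{H}^1_c$ an absolute constant) is what rescues the extra $\sqrt{p}$; and no further saving is possible, since $\Sigma_2$ is genuinely of order $p$ when $p\equiv 1\bmod 4$ (e.g.\ $\Sigma_2=-6$ for $p=5$), while for $p\equiv 3\bmod 4$ the substitution $\lambda\mapsto\lambda^{-1}$ shows $\Sigma_2=\left(\frac{-1}{p}\right)\Sigma_2=0$, consistent with Remark \ref{remark}.
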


\begin{lemma}\label{lem-T}   Let $p$ be an odd prime, $\chi$ be any non-principal
character $\bmod\ p$. Then for any integer $m$ with $(m, p)=1$, we
have the identity
\begin{eqnarray*}
\left|\sum_{a=1}^{p-1}\chi\left(ma+\overline{a}\right)\right|=\left|\sum_{a=1}^{p-1}\chi(a)\left(\frac{a^2-m}{p}\right)\right|.
\end{eqnarray*}
\end{lemma}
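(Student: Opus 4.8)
The plan is to transform the sum $T := \sum_{a=1}^{p-1}\chi(ma+\overline{a})$ by grouping its terms according to the common value $t = ma+\overline{a}$ of the argument, and then to recognize the resulting expression as the right-hand side up to a unimodular factor.

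First I would rewrite $T = \sum_{t=0}^{p-1}\chi(t)\,N(t)$, where $N(t)$ counts the number of $a\in\{1,\dots,p-1\}$ with $ma+\overline{a}\equiv t \pmod p$. After multiplying by $a$, the equation $ma+\overline{a}=t$ becomes the quadratic $ma^2-ta+1\equiv 0\pmod p$; since $m\not\equiv 0$ this is a genuine quadratic whose two roots have product $\overline{m}\neq 0$, so neither root equals $0$ and every solution automatically lies in $\{1,\dots,p-1\}$. Counting roots of a quadratic over $\mathbb{F}_p$ by its discriminant $\Delta=t^2-4m$ then yields the uniform formula $N(t)=1+\left(\frac{t^2-4m}{p}\right)$, valid for every $t$ (the ramified case $\Delta\equiv 0$ being correctly captured because the Legendre symbol vanishes there).

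Next I would exploit that $\chi$ is non-principal, so $\sum_{t=0}^{p-1}\chi(t)=0$, in order to discard the constant part of $N(t)$ and obtain
\[
T=\sum_{t=1}^{p-1}\chi(t)\left(\frac{t^2-4m}{p}\right).
\]
The final step is the substitution $t=2s$, which is a bijection of $\mathbb{F}_p^\ast$ since $2$ is invertible modulo $p$; because $4$ is a perfect square one has $\left(\frac{t^2-4m}{p}\right)=\left(\frac{s^2-m}{p}\right)$, while $\chi(2s)=\chi(2)\chi(s)$ pulls out the unimodular constant $\chi(2)$. Hence $T=\chi(2)\sum_{s=1}^{p-1}\chi(s)\left(\frac{s^2-m}{p}\right)$, and taking absolute values (using $|\chi(2)|=1$) gives the claimed identity.

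I expect the only delicate point to be the counting step: one must verify that the quadratic $ma^2-ta+1=0$ never forces $a=0$, and that the discriminant formula $N(t)=1+\left(\frac{t^2-4m}{p}\right)$ holds uniformly across the split, inert, and ramified cases. Everything after that is routine bookkeeping with multiplicative characters.
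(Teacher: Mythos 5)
Your proof is correct and follows essentially the same route as the paper's: both group terms by the value $t=ma+\overline{a}$, count solutions of the resulting quadratic via the discriminant formula $N(t)=1+\left(\frac{t^2-4m}{p}\right)$ (with $a=0$ harmlessly excluded since it is never a root), drop the constant term using $\sum_{t}\chi(t)=0$ for non-principal $\chi$, and rescale $t=2s$ to pull out the unimodular factor $\chi(2)$. The only cosmetic difference is that the paper first disposes of odd characters (both sides vanish) before computing, a reduction your argument shows is not actually needed.
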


\begin{proof} It is clear that if $\chi$ is an odd character modulo $p$, then both sides of the lemma are zero. So without loss of generality we can assume that $\chi$ is a non-principal even character modulo $p$.
  Let $am+\overline{a}=u$, then from the definition of $\overline{a}$ and the properties of the congruence modulo $p$
 we know that for any $(m, p)=1$, we have
\begin{eqnarray}
&&\sum_{a=1}^{p-1}\chi\left(ma+\overline{a}\right)=\sum_{u=1}^{p-1}\chi(u)\mathop{\sum_{a=1}^{p-1}}_{am+\overline{a}\equiv
u\bmod p } 1\nonumber\\
&=&\sum_{u=1}^{p-1}\chi(u)\mathop{\sum_{a=1}^{p-1}}_{a^2m^2-amu+m
\equiv0\bmod p } 1\nonumber\\
&=&\sum_{u=1}^{p-1}\chi(u)\mathop{\sum_{a=0}^{p-1}}_{(2am-u)^2\equiv
u^2-4m\bmod p }
1=\sum_{u=1}^{p-1}\chi(u)\mathop{\sum_{a=0}^{p-1}}_{a^2\equiv
u^2-4m\bmod p } 1.
\end{eqnarray}
Note that for any fixed integer $u^2-4m$, the number of the
solutions of the congruent equation $x^2\equiv u^2-4m \bmod p$ are
$1+ \left(\frac{u^2-4m}{p}\right)$, so from (2.1) we have
\begin{eqnarray*}
\sum_{a=1}^{p-1}\chi\left(ma+\overline{a}\right)=\sum_{u=1}^{p-1}\chi(u)
\left(1+\left(\frac{u^2-4m}{p}\right)\right)=\chi(2)\sum_{u=1}^{p-1}\chi(u)
\left(\frac{u^2-m}{p}\right),
\end{eqnarray*}
which implies Lemma 2.8.
\end{proof}

\section{The proofs of the main theorems}

In this section,  we prove our estimate for a multi-variable character sums, where we relate our sum to trace of Frobenius of $\ell$-adic sheaves.
\begin{proof}[Proof of Theorem 1.2]
We write $\rho(t)=\left(\frac{t}{p}\right)$ and denote the our sum as $S$. Hence we have
\begin{align*}
S&=\sum_{a=2}^{p-2}\sum_{b=1}^{p-1}\sum_{c=2}^{p-2}\sum_{d=1}^{p-1}\rho(a^2-b^2)\rho(b^2-1)\rho(c^2-d^2)\rho(d^2-1)\rho(a^2c^2-1)\\
&=\sum_{a=2}^{p-2}\sum_{b=1}^{p-1}\sum_{c=2}^{p-2}\sum_{d=1}^{p-1}\rho(a^2-b^2)\rho(b^2-1)\rho(c^2-d)\rho(d-1)\rho(a^2c^2-1)(1+\rho(d))\\
&=S_1+S_2,
\end{align*}
where
\begin{align*}
S_1&=\sum_{a=2}^{p-2}\sum_{b=1}^{p-1}\sum_{c=2}^{p-2}\sum_{d=1}^{p-1}\rho(a^2-b^2)\rho(b^2-1)\rho(c^2-d)\rho(d-1)\rho(a^2c^2-1),\\
S_2&=\sum_{a=2}^{p-2}\sum_{b=1}^{p-1}\sum_{c=2}^{p-2}\sum_{d=1}^{p-1}\rho(a^2-b^2)\rho(b^2-1)\rho(c^2-d)\rho(d-1)\rho(a^2c^2-1)\rho(d).
\end{align*}
Now
\begin{align*}
S_1&=\sum_{a=2}^{p-2}\sum_{b=1}^{p-1}\sum_{c=2}^{p-2}\rho(a^2-b^2)\rho(b^2-1)\rho(a^2c^2-1)\left(\frac{-1}{p}\right)\sum_{d=1}^{p-1}\rho(d-c^2)\rho(d-1)\\
&=\sum_{a=2}^{p-2}\sum_{b=1}^{p-1}\sum_{c=2}^{p-2}\rho(a^2-b^2)\rho(b^2-1)\rho(a^2c^2-1)\left(\frac{-1}{p}\right)\sum_{d=0}^{p-1}\rho(d-c^2)\rho(d-1)\\
&
\qquad\qquad -\sum_{a=2}^{p-2}\sum_{b=1}^{p-1}\sum_{c=2}^{p-2}\rho(a^2-b^2)\rho(b^2-1)\rho(a^2c^2-1)\left(\frac{-1}{p}\right).
\end{align*}
The inner sum in the first expression is $-1$ as $c\neq \pm 1$. Hence we have
\begin{align*}
S_1=-2\left(\frac{-1}{p}\right)\sum_{a=2}^{p-2}\sum_{b=1}^{p-1}\sum_{c=2}^{p-2}\rho(a^2-b^2)\rho(b^2-1)\rho(a^2c^2-1)=O(p^2)
\end{align*}
since
\begin{align*}
\sum_{c=2}^{p-2}\rho(a^2c^2-1)=-2\rho(a^2-1)+\sum_{c=1}^{p-1}\rho(c^2-1)=-2\rho(a^2-1)-1-\rho(-1).
\end{align*}
Hence we have
\begin{align*}
S=S_2+O(p^2).
\end{align*}
A similar reduction for $b$ on $S_2$ will give
\begin{align}\label{TH1}
S=S_3+O(p^2),
\end{align}
where
\begin{align*}
S_3=\sum_{a=2}^{p-2}\sum_{b=1}^{p-1}\sum_{c=2}^{p-2}\sum_{d=1}^{p-1}\rho(a^2-b)\rho(b-1)\rho(c^2-d)\rho(d-1)\rho(a^2c^2-1)\rho(b)\rho(d).
\end{align*}
We now follow the same technique on $S_3$. Notice that
\begin{align}\label{TH2}
S_3&=\sum_{a=1}^{p-1}\sum_{b=1}^{p-1}\sum_{c=2}^{p-2}\sum_{d=1}^{p-1}\rho(a^2-b)\rho(b-1)\rho(c^2-d)\rho(d-1)\rho(a^2c^2-1)\rho(b)\rho(d)+O(p^2)\notag\\
&=\sum_{a=1}^{p-1}\sum_{b=1}^{p-1}\sum_{c=2}^{p-2}\sum_{d=1}^{p-1}\rho(a-b)\rho(b-1)\rho(c^2-d)\rho(d-1)\rho(ac^2-1)\rho(b)\rho(d)(1+\rho(a))\notag\\&\hspace{10cm}+O(p^2)\notag\\
&=U_1+U_2+O(p^2),
\end{align}
where
\begin{align*}
U_1=\sum_{a=1}^{p-1}\sum_{b=1}^{p-1}\sum_{c=2}^{p-2}\sum_{d=1}^{p-1}\rho(a-b)\rho(b-1)\rho(c^2-d)\rho(d-1)\rho(ac^2-1)\rho(b)\rho(d)
\end{align*}
and
\begin{align*}
U_2=\sum_{a=1}^{p-1}\sum_{b=1}^{p-1}\sum_{c=2}^{p-2}\sum_{d=1}^{p-1}\rho(a-b)\rho(b-1)\rho(c^2-d)\rho(d-1)\rho(ac^2-1)\rho(b)\rho(d)\rho(a).
\end{align*}
Now
\begin{align*}
U_1&=\sum_{b=1}^{p-1}\sum_{c=2}^{p-2}\sum_{d=1}^{p-1}\rho(b-1)\rho(c^2-d)\rho(d-1)\rho(b)\rho(d)\sum_{a=1}^{p-1}\rho(ac^2-1)\rho(a-b)\\
&=\sum_{b=1}^{p-1}\sum_{c=2}^{p-2}\sum_{d=1}^{p-1}\rho(b-1)\rho(c^2-d)\rho(d-1)\rho(b)\rho(d)\sum_{a=0}^{p-1}\rho(ac^2-1)\rho(a-b)\\
&\qquad\qquad-\sum_{b=1}^{p-1}\sum_{c=2}^{p-2}\sum_{d=1}^{p-1}\rho(b-1)\rho(c^2-d)\rho(d-1)\rho(d)\\
&=\sum_{b=1}^{p-1}\sum_{c=2}^{p-2}\sum_{d=1}^{p-1}\rho(b-1)\rho(c^2-d)\rho(d-1)\rho(b)\rho(d)\sum_{a=0}^{p-1}\rho(ac^2-1)\rho(a-b)+O(1)\\
&=-\sum_{b=1}^{p-1}\sum_{c=2, b\neq c^{-2}}^{p-2}\sum_{d=1}^{p-1}\rho(b-1)\rho(c^2-d)\rho(d-1)\rho(b)\rho(d)\\
&\qquad\qquad\qquad\qquad+(p-1)\sum_{c=2}^{p-2}\sum_{d=1}^{p-1}\rho(c^{-2}-1)\rho(c^2-d)\rho(d-1)\rho(d)+O(1),
\end{align*}
since
\begin{align*}
-\sum_{b=1}^{p-1}\sum_{c=2}^{p-2}\sum_{d=1}^{p-1}\rho(b-1)\rho(c^2-d)\rho(d-1)\rho(d) = \rho(-1)\sum_{c=2}^{p-2}\sum_{d=1}^{p-1}\rho(c^2-d)\rho(d-1)\rho(d) \\
= \rho(-1)\sum_{d=1}^{p-1}(-1-\rho(-d)-2\rho(1-d))\rho(d-1)\rho(d)=2+2\rho(-1).
\end{align*}
Consider the second sum,
\begin{align}
&\sum_{c=2}^{p-2}\sum_{d=1}^{p-1}\rho(c^{-2}-1)\rho(c^2-d)\rho(d-1)\rho(d)\notag\\
&=\sum_{c=1}^{p-1}\sum_{d=1}^{p-1}\rho(c^{-2}-1)\rho(c^2-d)\rho(d-1)\rho(d)\notag\\
&=\left(\frac{-1}{p}\right)\sum_{c=1}^{p-1}\sum_{d=1}^{p-1}\rho(c-1)\rho(c-d)\rho(d-1)\rho(d)(1+\rho(c))\notag\\
&=1+\left(\frac{-1}{p}\right)+\left(\frac{-1}{p}\right)\sum_{c=1}^{p-1}\sum_{d=1}^{p-1}\rho(c-1)\rho(c-d)\rho(d-1)\rho(d)\rho(c)\notag\\
&=1+\left(\frac{-1}{p}\right)+\sum_{c=1}^{p-1}\rho(c^2-c)\phi(c)\notag,
\end{align}
where
\begin{align*}
\phi(c)=\sum_{d=1}^{p-1}\rho(d-c)\rho(d-1)\rho(d)=\sum_{d=0}^{p-1}\rho(d-c)\rho(d-1)\rho(d).
\end{align*}
In order to estimate this sum, we will interpret it as the Frobenius trace function associated to an $\ell$-adic sheaf $\mathcal F$ for some fixed prime $\ell\neq p$. See \cite{FKMS} for an overview of the involved theory. More precisely, $\mathcal F$ will be the first cohomology sheaf of the Legendre family of elliptic curves
 \begin{align*}
 E(t): y^2=x(x-1)(x-t)
 \end{align*}
which is constructed as follows: let $S={\mathbb A}^1_k$ be the affine line (with coordinate $t$) over the finite field $k:={\mathbb F}_p$, $X\subseteq {\mathbb A}^2_S$ the elliptic curve with equation $y^2=x(x-1)(x-t)$, and ${\mathcal F}={\mathrm R}^1\pi_!{\mathbb Q_\ell}$, where $\pi:X\to S$ is the structural map.

It is known (see eg. \cite[10.1]{katz-sarnak}) that $\mathcal F$ is a rank 2 smooth sheaf on $S-\{0,1\}$. At these two points it has unipotent local monodromy, and the local monodromy at infinity is unipotent tensored with an order 2 character. The action of a geometric Frobenius element at $t\in k$ has trace $p-N(t)=-\phi(t)$, where $N(t)$ is the number of $k$-rational points on the curve $E(t)$.

By the Grothendieck-Lefschetz trace formula, we have
$$
\sum_{c=1}^{p-1}\rho(c^2-c)\phi(c)=\mathrm{Tr}(Fr|{\mathrm H}^1_c({\mathbb A}^1_{\bar k},{\mathcal F}\otimes{\mathcal L}))-\mathrm{Tr}(Fr|{\mathrm H}^2_c({\mathbb A}^1_{\bar k},{\mathcal F}\otimes{\mathcal L}))
$$
where $\mathcal L=[t\mapsto t^2-t]^\ast{\mathcal L}_\rho$ is the rank one (pull-back of) Kummer sheaf whose Frobenius trace function at $c\in k$ is $\rho(c^2-c)$. Since $\mathcal L$ has order two monodromy action at $t=0$, the monodromy action at $0$ on the tensor product ${\mathcal F}\otimes{\mathcal L}$ is unipotent tensored with an order 2 character. In particular, ${\mathcal F}\otimes{\mathcal L}$ does not have any geometrically constant component (which would have trivial local monodromy at $0$), which implies that ${\mathrm H}^2_c({\mathbb A}^1_{\bar k},{\mathcal F}\otimes{\mathcal L})=0$. Therefore,
$$
\sum_{c=1}^{p-1}\rho(c^2-c)\phi(c)=\mathrm{Tr}(Fr|{\mathrm H}^1_c({\mathbb A}^1_{\bar k},{\mathcal F}\otimes{\mathcal L}))
$$
and, since $\mathcal F$ (and therefore ${\mathcal F}\otimes{\mathcal L}$) is mixed of weight $\leq 1$, all Frobenius eigenvalues of ${\mathrm H}^1_c({\mathbb A}^1_{\bar k},{\mathcal F}\otimes{\mathcal L})$ have absolute value $\leq p$. Moreover, the Ogg-Shafarevic formula implies that $\dim({\mathrm H}^1_c({\mathbb A}^1_{\bar k},{\mathcal F}\otimes{\mathcal L}))=-\chi({\mathcal F}\otimes{\mathcal L})=2$ (since ${\mathcal F}\otimes{\mathcal L}$ is smooth of rank $2$ on $S-\{0,1\}$, tamely ramified everywhere, and has rank 0 at $t=1$), so we get the estimate
\begin{align}\label{TH3}
\left|\sum_{c=1}^{p-1}\rho(c^2-c)\phi(c)\right|\leq 2p.
\end{align}

Now consider
\begin{align}
&\sum_{c=2}^{p-2}\sum_{b=1, b\neq c^{-2}}^{p-1}\sum_{d=1}^{p-1}\rho(b-1)\rho(c^2-d)\rho(d-1)\rho(b)\rho(d)\notag\\
&=\sum_{c=2}^{p-2}\sum_{b=1}^{p-1}\sum_{d=1}^{p-1}\rho(b-1)\rho(c^2-d)\rho(d-1)\rho(b)\rho(d)\notag\\
&\qquad\qquad-\left(\frac{1}{p}\right)\sum_{c=2}^{p-2}\sum_{d=1}^{p-1}\rho(c^{2}-1)\rho(c^2-d)\rho(d-1)\rho(d)\notag\\
&=O(p^2)\notag.
\end{align}
Hence
\begin{align}\label{TH4}
U_1=O(p^2).
\end{align}
Now rewriting $U_2$ and using \eqref{TH3} we get
\begin{align*}
U_2&=\sum_{a=1}^{p-1}\sum_{b=1}^{p-1}\sum_{c=2}^{p-2}\sum_{d=1}^{p-1}\rho(a-b)\rho(b-1)\rho(c^2-d)\rho(d-1)\rho(ac^2-1)\rho(b)\rho(d)\rho(a)\notag\\
&=\sum_{a=1}^{p-1}\sum_{b=1}^{p-1}\sum_{c=1}^{p-1}\sum_{d=1}^{p-1}\rho(a-b)\rho(b-1)\rho(c-d)\rho(d-1)\rho(ac-1)\rho(b)\rho(d)\rho(a)(1+\rho(c))\\
&\qquad+2\rho(-1)\sum_{b=1}^{p-1}\rho(b)\rho(b-1)\phi(b)\notag\\
&=\sum_{a=1}^{p-1}\sum_{b=1}^{p-1}\sum_{c=1}^{p-1}\sum_{d=1}^{p-1}\rho(a-b)\rho(b-1)\rho(c-d)\rho(d-1)\rho(ac-1)\rho(b)\rho(d)\rho(a)(1+\rho(c))+O(p)\notag\\
&=U_2^{'}+U_2^{''}+O(p),
\end{align*}
where
\begin{align*}
&U_2^{'}=\sum_{a=1}^{p-1}\sum_{b=1}^{p-1}\sum_{c=1}^{p-1}\sum_{d=1}^{p-1}\rho(a-b)\rho(b-1)\rho(c-d)\rho(d-1)\rho(ac-1)\rho(b)\rho(d)\rho(a)\\
&U_2^{''}=\sum_{a=1}^{p-1}\sum_{b=1}^{p-1}\sum_{c=1}^{p-1}\sum_{d=1}^{p-1}\rho(a-b)\rho(b-1)\rho(c-d)\rho(d-1)\rho(ac-1)\rho(b)\rho(d)\rho(a)\rho(c).
\end{align*}
Proceeding similar as before we have
\begin{align*}
U_2^{'}=\sum_{a=1}^{p-1}\sum_{b=1}^{p-1}\sum_{d=1}^{p-1}\rho(a-b)\rho(b-1)\rho(d-1)\rho(b)\rho(d)\sum_{c=1}^{p-1}\rho(c-d)\rho(c-a^{-1})\\
=-\rho(-1)+\sum_{a=1}^{p-1}\sum_{b=1}^{p-1}\sum_{d=1}^{p-1}\rho(a-b)\rho(b-1)\rho(d-1)\rho(b)\rho(d)\\
\sum_{c=0}^{p-1}\rho(c-d)\rho(c-a^{-1})\\
=-\rho(-1)-\sum_{a=1}^{p-1}\sum_{b=1}^{p-1}\sum_{d=1,d\neq a^{-1}}^{p-1}\rho(a-b)\rho(b-1)\rho(d-1)\rho(b)\rho(d)\\
\qquad\qquad +(p-1)\sum_{a=1}^{p-1}\sum_{b=1}^{p-1}\rho(a-b)\rho(b-1)\rho(a^{-1}-1)\rho(b)\rho(a^{-1})\\
=\rho(-1)(p-1)\sum_{b=1}^{p-1}\rho(b)\rho(b-1)\sum_{a=1}^{p-1}\rho(a-b)\rho(a-1)+O(p^2)\\
=\rho(-1)(p-1)\sum_{b=2}^{p-1}\rho(b)\rho(b-1)\sum_{a=0}^{p-1}\rho(a-b)\rho(a-1)+O(p^2)
=O(p^2).
\end{align*}

Finally, we can write $U_{2}^{''}$ as
\begin{align*}
U_2^{''}&=\sum_{a=1}^{p-1}\sum_{c=1}^{p-1}\phi(a)\phi(c)\rho(ac-1)\rho(ac)\\
&=\sum_{a=1}^{p-1}\rho(a-1)\rho(a)\sum_{c=1}^{p-1}\phi(ac^{-1})\phi(c)=\sum_{a=1}^{p-1}\rho(a-1)\rho(a)\psi(a),
\end{align*}
where
$$
\psi(a)=\sum_{c=1}^{p-1}\phi(ac^{-1})\phi(c).
$$
This $\psi$ is the Frobenius trace function at $a\in k^\times$ of the multiplicative $!$-convolution ${\mathcal F}\ast{\mathcal F}$ (see eg. \cite[8.1]{katz-esde}). In general, the convolution is only defined in the derived category of $\ell$-adic sheaves, but in this case we claim that ${\mathcal F}\ast{\mathcal F}$ collapses to a single sheaf.

Let ${\mathbb G}_{m,k}={\mathbb A}^1_k-\{0\}$ be the one-dimensional torus over $k$. The $!$-convolution ${\mathcal F}\ast{\mathcal F}$ is defined as ${\mathrm R}\mu_!(\pi_1^\ast{\mathcal F}\otimes\pi_2^\ast{\mathcal F})$, where $\mu,\pi_1,\pi_2:{\mathbb G}_{m,k}\times{\mathbb G}_{m,k}\to{\mathbb G}_{m,k}$ are the multiplication map and the projections onto each factor. For $i\geq 0$ and $t\in\bar k^\times$, the fibre of ${\mathrm R}^i\mu_!(\pi_1^\ast{\mathcal F}\otimes\pi_2^\ast{\mathcal F})$ at $t$ is $\mathrm H^i_c({\mathbb G}_{m,\bar k},{\mathcal F}\otimes[s\mapsto t/s]^\ast{\mathcal F})$. Since $\mathcal F$ does not have punctual sections, this fibre is 0 for $i\neq 1,2$. Moreover, we know that $\mathcal F$ has unipotent local monodromy at $0$ and its monodromy at $\infty$ is unipotent tensored with an order 2 character. The same holds then for the monodromy at $0$ of $[s\mapsto t/s]^\ast{\mathcal F}$. Therefore, the monodromy at $0$ of ${\mathcal F}\otimes[s\mapsto t/s]^\ast{\mathcal F}$ is unipotent tensored with an order 2 character. In particular, ${\mathcal F}\otimes[s\mapsto t/s]^\ast{\mathcal F}$ can not have a geometrically constant component, so $\mathrm H^2_c({\mathbb G}_{m,\bar k},{\mathcal F}\otimes[s\mapsto t/s]^\ast{\mathcal F})=0$. We conclude that the cohomology of ${\mathcal F}\ast{\mathcal F}$ is concentrated in degree $1$. Let us denote ${\mathcal G}={\mathrm R}^1\mu_!(\pi_1^\ast{\mathcal F}\otimes\pi_2^\ast{\mathcal F})$. Then the Frobenius trace function of ${\mathcal G}$ at $t\in k^\times$ is $-\psi(k)$.

Since ${\mathcal F}$ has weight $\leq 1$, ${\mathcal G}$ has weight $\leq 3$. By \cite[Corollary 24]{RL}, it is tamely ramified everywhere, and its monodromy at $0$ is unipotent \cite[Proposition 28]{RL}. Moreover, it is smooth of rank $4$ on ${\mathbb G}_{m,k}-\{1\}$ by the Ogg-Shafarevic formula.

The sum $\sum_{a=1}^{p-1}\rho(a-1)\rho(a)\psi(a)$ is then minus the sum of the Frobenius traces of the sheaf ${\mathcal G}\otimes{\mathcal L}$ at the points of $k^\times$. So, by the Grothendieck-Lefschetz trace formula, it can be written as
$$
\mathrm{Tr}(Fr|{\mathrm H}^1_c({\mathbb G}_{m,\bar k},{\mathcal G}\otimes{\mathcal L}))-\mathrm{Tr}(Fr|{\mathrm H}^2_c({\mathbb G}_{m,\bar k},{\mathcal G}\otimes{\mathcal L}))
$$
where $\mathcal L=[t\mapsto t^2-t]^\ast{\mathcal L}_\rho$ is the same as above. Since $\mathcal G$ has unipotent monodromy at $0$, the monodromy at $0$ of ${\mathcal G}\otimes{\mathcal L}$ is unipotent tensored with a rank 2 character, so in particular ${\mathcal G}\otimes{\mathcal L}$ can not have any geometrically constant component and the ${\mathrm H}^2_c$ term vanishes. Since ${\mathcal G}\otimes{\mathcal L}$ has weight $\leq 3$, the Frobenius eigenvalues of its ${\mathrm H}^1_c$ have absolute value $\leq p^2$. Moreover, by the Ogg-Shafarevic formula, we have $\dim({\mathrm H}^1_c({\mathbb G}_{m,\bar k},{\mathcal G}\otimes{\mathcal L}))=-\chi({\mathrm H}^1_c({\mathbb G}_{m,\bar k},{\mathcal G}\otimes{\mathcal L}))=4$, so we get the estimate
$$
\left|\sum_{a=1}^{p-1}\rho(a-1)\rho(a)\psi(a)\right|\leq 4p^2.
$$
Hence we get 
\begin{align}\label{TH5}
U_2=O(p^2).
\end{align}
Combining \eqref{TH1}, \eqref{TH2}, \eqref{TH4} and \eqref{TH5} we prove our result.
\end{proof}

\begin{proof}[Proof of Theorem  1.4]
\par For any $n$ with $\gcd(n,p)=1$, we have
 \begin{equation}\label{X20}
   \sum_{\chi\bmod p}|G(n,\chi;p)|^{10}= \sum_{\substack{\chi\neq\chi_0}}|G(n,\chi;p)|^{10}+|G(n,{\chi}_{0};p)|^{10},
 \end{equation}
 where using Lemma \ref{X2}, we obtain
 \begin{align}\label{X21}
 |G(n,{\chi}_{0};p)|^{10}=\begin{cases}
 \left( p+1-2\sqrt{p}\left(\frac{n}{p}\right)\right)^5, &p\equiv 1\bmod 4;\\
 (p+1)^5, &p\equiv 3\bmod 4.
 \end{cases}
 \end{align}
It follows from Lemma \ref{X2} that
\begin{align}\label{X50}
   &\sum_{\chi\neq\chi_0}|G(n,\chi;p)|^{10}\\\notag
   &=\sum_{\chi\neq\chi_0}\left(Ap+B\sum_{a=2}^{p-2}\chi(a)\left(\frac{a^2-1}{p}\right)\right)^5\\\notag
   &= N_1+N_2+N_3+N_4+N_5+N_6,\notag
\end{align}
where
\begin{align*}
&N_1=\sum_{\chi\neq\chi_0}(Ap)^5;\\
&N_2=\sum_{\chi\neq\chi_0}\binom{5}{1}(Ap)^4\cdot B\sum_{a=2}^{p-2}\chi(a)\left(\frac{a^2-1}{p}\right);\\
&N_3=\sum_{\chi\neq\chi_0}\binom{5}{2}(Ap)^3\cdot \left(B\sum_{a=2}^{p-2}\chi(a)\left(\frac{a^2-1}{p}\right)\right)^2;\\
&N_4=\sum_{\chi\neq\chi_0}\binom{5}{3}(Ap)^2\cdot\left(B\sum_{a=2}^{p-2}\chi(a)\left(\frac{a^2-1}{p}\right)\right)^3;\\
&N_5=\sum_{\chi\neq\chi_0}\binom{5}{4}(Ap)\cdot\left(B\sum_{a=2}^{p-2}\chi(a)\left(\frac{a^2-1}{p}\right)\right)^4;\\
&N_6=\sum_{\chi\neq\chi_0}\binom{5}{5}\left(B\sum_{a=2}^{p-2}\chi(a)\left(\frac{a^2-1}{p}\right)\right)^5.
\end{align*}
We will evaluate $N_1, N_2, N_3, N_4, N_5$ and $N_6$ one by one. Using Lemma \ref{X3}, Lemma \ref{X4} and Lemma \ref{X11} , we get
\begin{equation}\label{X22}
 N_1=16p^5(p-3)
\end{equation}
and
\begin{align}\label{X23}
N_2=\begin{cases}160p^4\sqrt{p}\left(\frac{n}{p}\right),&p\equiv 1\bmod 4;\\
0,&p\equiv 3\bmod 4.
\end{cases}
 \end{align}
 We have the identity
\begin{align}\label{X5}
 \sum_{a=1}^{p-1}&\sum_{b=1}^{p-1}\chi(ab)\left(\frac{a^2-1}{p}\right)\left(\frac{b^2-1}{p}\right)\\ \notag
 &=\sum_{a=1}^{p-1}\sum_{b=1}^{p-1}\chi(a)\left(\frac{a^2\overline{b}^2-1}{p}\right)\left(\frac{b^2-1}{p}\right)\\ \notag
 &=\sum_{a=1}^{p-1}\sum_{b=1}^{p-1}\chi(a)\left(\frac{a^2-b^2}{p}\right)\left(\frac{b^2-1}{p}\right)\\ \notag
 &=\left(\frac{-1}{p}\right)(p-3)A+\sum_{a=2}^{p-2}\sum_{b=1}^{p-1}\chi(a)\left(\frac{a^2-b^2}{p}\right)\left(\frac{b^2-1}{p}\right).
 \end{align}
%  and
%  \begin{align}\label{G11}
%  &\sum_{a=2}^{p-2}\left(\frac{a^2-b^2}{p}\right)\sum_{\chi(-1)=-1}\chi(a)\left|L(1,\chi)\right|\\
%  &\qquad=\sum_{a=1}^{p-1}\left(\frac{a^2-1}{p}\right)\sum_{\chi(-1)=-1}\chi(a)\left|L(1,\chi)\right|=0.\notag
%   \end{align}
%Also, we have the Weil estimate
%\begin{align}\label{G5}
 %\sum_{b=1}^{p-1}\left(\frac{b^2-a^2}{p}\right)\left(\frac{b^2-1}{p}\right)\leq3\sqrt{p},\qquad a^2\not\equiv 1 \ppmod p.
%\end{align}
 Using  \eqref{X5}, we obtain
 \allowdisplaybreaks
 \begin{align*}
  &\sum_{\chi\neq\chi_0}\binom{5}{2}(Ap)^3\cdot\left(B\sum_{a=2}^{p-2}\chi(a)\left(\frac{a^2-1}{p}\right)\right)^2\\
  &=40p^3B^2\sum_{\chi\neq\chi_0}A\cdot\left(\sum_{a=2}^{p-2}\chi(a)\left(\frac{a^2-1}{p}\right)\right)^2\\
  &=40p^3B^2\sum_{\chi\neq\chi_0}A\cdot\left(\sum_{a=2}^{p-2}\chi(a)\left(\frac{a^2-1}{p}\right)\right)^2\\
  &=40p^3B^2\sum_{\chi\neq\chi_0}A\left[\left(\frac{-1}{p}\right)(p-3)A+\sum_{a=2}^{p-2}\sum_{b=1}^{p-1}\chi(a)\left(\frac{a^2-b^2}{p}\right)
  \left(\frac{b^2-1}{p}\right)\right]\\
    &=40p^3B^2\left[\left(\frac{-1}{p}\right)(p-3)\sum_{\chi\neq\chi_0}A^2+\sum_{a=2}^{p-2}\sum_{b=1}^{p-1}\left(\frac{a^2-b^2}{p}\right)
  \left(\frac{b^2-1}{p}\right)\left(\sum_{\chi\bmod p}\chi(a)-1\right)\right.\\&+\left.\sum_{a=2}^{p-2}\sum_{b=1}^{p-1}\left(\frac{a^2-b^2}{p}\right)
  \left(\frac{b^2-1}{p}\right)\left(\sum_{\chi\bmod p}\chi(-a)-1\right)\right].
  \end{align*}
  Thus, using Lemma \ref{X6}, Lemma \ref{X3} and Lemma \ref{X11}, we obtain
  \begin{align}\label{X24}
   N_3=\begin{cases}
        80p^4(p^2-4p-1), &p\equiv 1\bmod 4;\\
        80p^4(p^2-4p+3), &p\equiv 3\bmod 4.
       \end{cases}
  \end{align}
We now consider two cases to evaluate $N_4$.\\
Case 1: If $p\equiv 1\bmod 4$, then we have
\begin{align}\label{X9}
 &\left(\sum_{a=2}^{p-2}\chi(a)\left(\frac{a^2-1}{p}\right)\right)^3\\
 &=\sum_{a=2}^{p-2}\chi(a)\left(\frac{a^2-1}{p}\right)\sum_{b=2}^{p-2}
 \chi(\overline{b})\left(\frac{b^2-1}{p}\right)\sum_{c=2}^{p-2}\chi(c)\left(\frac{c^2-1}{p}\right)\notag\\
 &=\left[(p-3)A+\sum_{a=2}^{p-2}\sum_{c=1}^{p-1}\chi(a)\left(\frac{a^2-c^2}{p}\right)\left(\frac{c^2-1}{p}\right)\right]\sum_{b=2}^{p-2}
 \chi(\overline{b})\left(\frac{b^2-1}{p}\right)\notag\\
 &=(p-3)A\sum_{b=2}^{p-2}\chi(b)\left(\frac{b^2-1}{p}\right)+\sum_{a=2}^{p-2}\sum_{b=2}^{p-2}\sum_{c=1}^{p-1}\chi(a\overline{b})\left(\frac{a^2-c^2}{p}\right)
 \left(\frac{c^2-1}{p}\right) \left(\frac{b^2-1}{p}\right)\notag
\end{align}
Case 2: If $p\equiv 3 \bmod 4$, then similarly we get
\begin{align}\label{X10}
 &\left(\sum_{a=2}^{p-2}\chi(a)\left(\frac{a^2-1}{p}\right)\right)^3\\
 &=-(p-3)A\sum_{b=2}^{p-2}\chi(b)\left(\frac{b^2-1}{p}\right)-\sum_{a=2}^{p-2}\sum_{b=2}^{p-2}\sum_{c=1}^{p-1}\chi(a\overline{b})\left(\frac{a^2-c^2}{p}\right)
 \left(\frac{c^2-1}{p}\right) \left(\frac{b^2-1}{p}\right).\notag
\end{align}
If $p\equiv 1\bmod 4$, then it follows from \eqref{X9}, Lemma \ref{X7}, \ref{X6}, \ref{X4} and \ref{X11} that,
\begin{align*}
&\sum_{\chi\neq\chi_0}\binom{5}{3}(Ap)^2\cdot\left(B\sum_{a=2}^{p-2}\chi(a)\left(\frac{a^2-1}{p}\right)\right)^3\\
 &=20p^3\sqrt{p}\left(\frac{n}{p}\right)\sum_{\chi\neq\chi_0}A\left(\sum_{a=2}^{p-2}\chi(a)\left(\frac{a^2-1}{p}\right)\right)^3\\
 &=20p^3\sqrt{p}\left(\frac{n}{p}\right)(p-3)\sum_{\chi\neq\chi_0}A^2\sum_{b=2}^{p-2}\chi(b)\left(\frac{b^2-1}{p}\right)\\
 &+20p^3\sqrt{p}\left(\frac{n}{p}\right)\sum_{a=2}^{p-2}\sum_{b=2}^{p-2}\sum_{c=1}^{p-1}\left(\frac{a^2-c^2}{p}\right)
 \left(\frac{c^2-1}{p}\right) \left(\frac{b^2-1}{p}\right)\left[\sum_{\chi\bmod p}\chi(a\overline{b})-1\right]\\
 &+20p^3\sqrt{p}\left(\frac{n}{p}\right)\sum_{a=2}^{p-2}\sum_{b=2}^{p-2}\sum_{c=1}^{p-1}\left(\frac{a^2-c^2}{p}\right)
 \left(\frac{c^2-1}{p}\right) \left(\frac{b^2-1}{p}\right)\left[\sum_{\chi\bmod p}\chi(-a\overline{b})-1\right]\\
 &=40p^3\sqrt{p}\left(\frac{n}{p}\right)[8+(p-1)N],
\end{align*}
where $N$ is given by \eqref{X1}. \\Now notice that for $p\equiv 3 \bmod 4 $, we have
\begin{align*}
&\sum_{a=1}^{p-1}\sum_{c=1}^{p-1}\left(\frac{a^2-c^2}{p}\right)
 \left(\frac{c^2-1}{p}\right) \left(\frac{a^2-1}{p}\right)\\
 &=\sum_{a=1}^{p-1}\sum_{c=1}^{p-1}\left(\frac{a^2c^2-c^2}{p}\right)
 \left(\frac{c^2-1}{p}\right) \left(\frac{a^2c^2-1}{p}\right)\\
 &=\sum_{a=1}^{p-1}\sum_{c=1}^{p-1}\left(\frac{a^2-1}{p}\right)
 \left(\frac{c^2-1}{p}\right) \left(\frac{a^2c^2-1}{p}\right)
\end{align*}
and
\begin{align*}
&\sum_{a=1}^{p-1}\sum_{c=1}^{p-1}\left(\frac{a^2-\overline{c}^2}{p}\right)
 \left(\frac{\overline{c}^2-1}{p}\right) \left(\frac{a^2-1}{p}\right)\\
 &=\sum_{a=1}^{p-1}\sum_{c=1}^{p-1}\left(\frac{a^2c^2-1}{p}\right)
 \left(\frac{1-c^2}{p}\right) \left(\frac{a^2-1}{p}\right)\\
 &=-\sum_{a=1}^{p-1}\sum_{c=1}^{p-1}\left(\frac{a^2-1}{p}\right)
 \left(\frac{c^2-1}{p}\right) \left(\frac{a^2c^2-1}{p}\right).
\end{align*}
Thus in this case
\begin{align}\label{X-13}
\sum_{a=1}^{p-1}\sum_{c=1}^{p-1}\left(\frac{a^2-c^2}{p}\right)
 \left(\frac{c^2-1}{p}\right) \left(\frac{a^2-1}{p}\right)=0.
\end{align}
Hence for $p\equiv 3\bmod 4$, using \eqref{X10}, \eqref{X-13} and Lemma \ref{X7}, \ref{X6} and \ref{X4} we obtain
\begin{align*}
&\sum_{\chi\neq\chi_0}\binom{5}{3}(Ap)^2\cdot\left(B\sum_{a=2}^{p-2}\chi(a)\left(\frac{a^2-1}{p}\right)\right)^3\\
 &=20p^2B^3\sum_{\chi\neq\chi_0}A\left(\sum_{a=2}^{p-2}\chi(a)\left(\frac{a^2-1}{p}\right)\right)^3\\
 &=-20p^2B^3(p-3)\sum_{\chi\neq\chi_0}A^2\sum_{b=2}^{p-2}\chi(b)\left(\frac{b^2-1}{p}\right)\\
 &-20p^2B^3\sum_{a=2}^{p-2}\sum_{b=2}^{p-2}\sum_{c=1}^{p-1}\left(\frac{a^2-c^2}{p}\right)
 \left(\frac{c^2-1}{p}\right) \left(\frac{b^2-1}{p}\right)\left[\sum_{\chi\bmod p}\chi(a\overline{b})-1\right]\\
 &-20p^2B^3\sum_{a=2}^{p-2}\sum_{b=2}^{p-2}\sum_{c=1}^{p-1}\left(\frac{a^2-c^2}{p}\right)
 \left(\frac{c^2-1}{p}\right) \left(\frac{b^2-1}{p}\right)\left[\sum_{\chi\bmod p}\chi(-a\overline{b})-1\right]\\
 &=0.
\end{align*}
Thus, we get
\begin{align*}
 N_4=\begin{cases}
     40p^3\sqrt{p}\left(\frac{n}{p}\right)[8+(p-1)N], &p\equiv 1\bmod 4;\\
     0, &p\equiv 3\bmod 4.
    \end{cases}
\end{align*}
Using Lemma \ref{sum-2}, we obtain
\begin{align}\label{X25}
 N_4=\begin{cases}
     O(p^{11/2}), &p\equiv 1\bmod 4;\\
     0, &p\equiv 3\bmod 4.
    \end{cases}
\end{align}
We next evaluate $N_5$. Using Lemma \ref {X11} we can write
   \begin{align}\label{X30}
   &\sum_{\chi\neq\chi_0}\binom{5}{4}(Ap).\left(B\sum_{a=2}^{p-2}\chi(a)\left(\frac{a^2-1}{p}\right)\right)^4\\\notag
   &=5p^3\left[\sum_{\chi\neq\chi_0}\left(\sum_{a=2}^{p-2}\chi(a)\left(\frac{a^2-1}{p}\right)\right)^4+\sum_{\chi\neq\chi_0}\chi(-1)\left(\sum_{a=2}^{p-2}\chi(a)\left(\frac{a^2-1}{p}\right)\right)^4\right].\notag
   \end{align}
   We note that for any non-principal character $\chi$ modulo an odd prime $p$, $A=0$ or $2$. Hence from Lemma $\ref{X2}$
one can see that if $p\equiv 1\bmod 4,$ then $\displaystyle\sum_{a=2}^{p-1}\chi(a)\left(\frac{a^2-1}{p}\right)$ is a real number;
while if $p\equiv 3 \bmod 4$, then $\displaystyle\sum_{a=2}^{p-1}\chi(a)\left(\frac{a^2-1}{p}\right)$ is a purely imaginary number.
   Using \eqref{X5}, we have
   \begin{align}\label{X31}
   &\sum_{\chi\neq\chi_0}\left(\sum_{a=2}^{p-2}\chi(a)\left(\frac{a^2-1}{p}\right)\right)^4\\\notag
   &=(p-3)^2\sum_{\chi\neq\chi_0}A^2\\\notag
   &+2(p-3)\left(\frac{-1}{p}\right)\left[\sum_{a=2}^{p-2}\sum_{b=1}^{p-1}\left(\frac{a^2-b^2}{p}\right)\left(\frac{b^2-1}{p}\right)\left(\sum_{\chi\bmod p}\chi(a)-1\right)\right.\\\notag
   &\left.+\sum_{a=2}^{p-2}\sum_{b=1}^{p-1}\left(\frac{a^2-b^2}{p}\right)\left(\frac{b^2-1}{p}\right)\left(\sum_{\chi\bmod p}\chi(-a)-1\right)\right]\\\notag
   &+\sum_{a=2}^{p-2}\sum_{b=1}^{p-1}\sum_{c=2}^{p-2}\sum_{d=1}^{p-1}\left(\frac{a^2-b^2}{p}\right)\left(\frac{b^2-1}{p}\right)\left(\frac{c^2-d^2}{p}\right)\left(\frac{d^2-1}{p}\right)\left[\sum_{\chi \bmod p}\chi(a\overline{c})-1\right]\notag
   \end{align}
   and
   \begin{align}\label{X32}
   &\sum_{\chi\neq\chi_0}\chi(-1)\left(\sum_{a=2}^{p-2}\chi(a)\left(\frac{a^2-1}{p}\right)\right)^4\\\notag
   &=2(p-3)^2\sum_{\chi\neq\chi_0}A\\\notag
   &+2(p-3)\left(\frac{-1}{p}\right)\left[\sum_{a=2}^{p-2}\sum_{b=1}^{p-1}\left(\frac{a^2-b^2}{p}\right)\left(\frac{b^2-1}{p}\right)\left(\sum_{\chi\bmod p}\chi(a)-1\right)\right.\\\notag
   &\left.+\sum_{a=2}^{p-2}\sum_{b=1}^{p-1}\left(\frac{a^2-b^2}{p}\right)\left(\frac{b^2-1}{p}\right)\left(\sum_{\chi\bmod p}\chi(-a)-1\right)\right]\\
   &+\sum_{a=2}^{p-2}\sum_{b=1}^{p-1}\sum_{c=2}^{p-2}\sum_{d=1}^{p-1}\left(\frac{a^2-b^2}{p}\right)\left(\frac{b^2-1}{p}\right)\left(\frac{c^2-d^2}{p}\right)\left(\frac{d^2-1}{p}\right)\left[\sum_{\chi \bmod p}\chi(-a\overline{c})-1\right].\notag
   \end{align}
   Hence from \eqref{X30}, \eqref{X31} and \eqref{X32} and using Lemma \ref{X6} and \ref{X3}, we obtain
  \begin{align*}
   N_5=\begin{cases}
   10p^3[2p^3-14p^2+30p-34+(p-1)T], &p\equiv 1\bmod 4;\\
    10p^3[2p^3-14p^2+30p-18+(p-1)T], &p\equiv 3\bmod 4,
 \end{cases}
  \end{align*}
  where
  \begin{align*}
  T=\sum_{a=2}^{p-2}\sum_{b=1}^{p-1}\sum_{d=1}^{p-1}\left(\frac{a^2-b^2}{p}\right)\left(\frac{b^2-1}{p}\right)\left(\frac{a^2-d^2}{p}\right)\left(\frac{d^2-1}{p}\right).
  \end{align*}
 Now from relation \cite[(6)]{BB}, we know $T=p^2+O(p^{3/2})$. Hence we get
  \begin{align}\label{X26}
   N_5=
   30p^6+O(p^{11/2}),
  \end{align}
  for all odd prime $p$. Finally using \eqref{X5} we have
\begin{align}\label{X35}
 &\sum_{\chi\neq\chi_0}\binom{5}{5}\left(B\sum_{a=2}^{p-2}\chi(a)\left(\frac{a^2-1}{p}\right)\right)^5\\\notag
 &=B^5\sum_{\chi\neq\chi_0}\left(\sum_{a=2}^{p-2}\chi(a)\left(\frac{a^2-1}{p}\right)\right)^4\left(\sum_{e=2}^{p-2}\chi(e)\left(\frac{e^2-1}{p}\right)\right)\\\notag
 &=B^5\sum_{\chi\neq\chi_0}\left(\left(\frac{-1}{p}\right)(p-3)A+\sum_{a=2}^{p-2}\sum_{b=1}^{p-1}\chi(a)\left(\frac{a^2-b^2}{p}\right)
  \left(\frac{b^2-1}{p}\right)\right)^2\left(\sum_{e=2}^{p-2}\chi(e)\left(\frac{e^2-1}{p}\right)\right)\\\notag
  &=B^5\left[\sum_{\chi\neq\chi_0}(p-3)^2A^2\sum_{e=2}^{p-2}\chi(e)\left(\frac{e^2-1}{p}\right)\right.\\\notag
  &+2(p-3)\sum_{a=2}^{p-2}\sum_{b=1}^{p-1}\sum_{e=2}^{p-2}\left(\frac{a^2-b^2}{p}\right)
  \left(\frac{b^2-1}{p}\right)\left(\frac{e^2-1}{p}\right)\left(\sum_{\chi\bmod p}\chi(a\overline{e})-1\right)\\\notag
  &+2(p-3)\sum_{a=2}^{p-2}\sum_{b=1}^{p-1}\sum_{e=2}^{p-2}\left(\frac{a^2-b^2}{p}\right)
  \left(\frac{b^2-1}{p}\right)\left(\frac{e^2-1}{p}\right)\left(\sum_{\chi\bmod p}\chi(-a\overline{e})-1\right)\\\notag
  &+\left(\frac{-1}{p}\right)\left.\sum_{a=2}^{p-2}\sum_{b=1}^{p-1}\sum_{c=2}^{p-2}\sum_{d=1}^{p-1}\sum_{e=2}^{p-2}\left(\frac{a^2-b^2}{p}\right)\left(\frac{b^2-1}{p}
  \right)\left(\frac{c^2-d^2}{p}\right)\left(\frac{d^2-1}{p}\right)\left(\frac{e^2-1}{p}
  \right)\right.\notag\\
  &\hspace{2cm}\left.\times \left(\sum_{\chi\bmod p}\chi(ac\overline{e})-1\right)\right].\notag
  \end{align}
%   Note that $N_1,N_2,N_3,N_4$ and $N_5$ all are real numbers. Also, when $p\equiv 3 \bmod 4$, $\sum_{\chi\neq\chi_0}A^2\sum_{e=2}^{p-2}\chi(e)
%   \left(\frac{e^2-1}{p}\right)$, $\sum_{a=2}^{p-2}\sum_{b=1}^{p-1}\left(\frac{a^2-b^2}{p}\right)
%  \left(\frac{b^2-1}{p}\right)\left(\frac{a^2-1}{p}\right)$ and $\sum_{e=2}^{p-2}\left(\frac{e^2-1}{p}\right)$ are all equal to zero.
%  Since the moments of Gauss sums always give real values, so the term on the left side of $\eqref{X35}$ is a real number. Now, $B^5$ is purely imaginary
%  for all primes $p$ satisfying $p\equiv 3\bmod 4$. Hence, from \eqref{X35}, we obtain
%  \begin{align*}
%  \sum_{a=2}^{p-2}\sum_{b=1}^{p-1}\sum_{c=2}^{p-2}\sum_{d=1}^{p-1}\left(\frac{a^2-b^2}{p}\right)\left(\frac{b^2-1}{p}\right)
%  \left(\frac{c^2-d^2}{p}\right)\left(\frac{d^2-1}{p}\right)\left(\frac{a^2c^2-1}{p}\right)=0,
%  \end{align*}
%  for all primes $p$ satisfying $p\equiv 3\bmod 4$.
 Again, from \eqref{X35}, using Remark \ref{remark}, Lemma \ref{X7}, \ref{X6}, \ref{X4} and \ref{X11}, we get
  \begin{align*}
  N_6=\begin{cases}
  \left(\frac{n}{p}\right)p^2\sqrt{p}\left[32+(p-1)S+4(p-3)(p-1)N\right],&p\equiv 1\bmod 4;\\
  0,&p\equiv 3\bmod 4,
  \end{cases}
  \end{align*}
  where
  \begin{align*}
  S=\sum_{a=2}^{p-2}\sum_{b=1}^{p-1}\sum_{c=2}^{p-2}\sum_{d=1}^{p-1}\left(\frac{a^2-b^2}{p}\right)\left(\frac{b^2-1}{p}\right)
  \left(\frac{c^2-d^2}{p}\right)\left(\frac{d^2-1}{p}\right)\left(\frac{a^2c^2-1}{p}\right).
  \end{align*}
 We have proved in Theorem 1.1 that $S=O(p^2)$. Hence using Lemma \ref{sum-2} and the estimate for $S$, we have the estimate
    \begin{align}\label{X27}
  N_6=\begin{cases}
  O(p^{11/2}),&p\equiv 1\bmod 4;\\
  0,&p\equiv 3\bmod
   4.
  \end{cases}
  \end{align}
 Combining $\eqref{X20}$, $\eqref{X21}$, $\eqref{X50}$, $\eqref{X22}$, $\eqref{X23}$, $\eqref{X24}$, $\eqref{X25}$, $\eqref{X26}$ and $\eqref{X27}$
 we get the desired result, Theorem 1.4.
 \end{proof}

\begin{proof}[Proof of Theorem  1.6]
  Applying the asymptotic formulas in [2], [5] and [10], our Lemma 2.3 and Lemma 2.9 with $m=1$  we have
\begin{eqnarray*}
&& \mathop{\sum_{\chi \bmod p}}_{\chi\neq \chi_0}\left|\sum_{a=1}^{p-1}\chi\left(ma+\overline{a}\right)\right|^4= \mathop{\sum_{\chi \bmod p}}_{\chi\neq \chi_0}\left|\sum_{a=1}^{p-1}\chi(a)\left(\frac{a^2-1}{p}\right)\right|^4\nonumber\\
&=&\frac{1}{\left|\tau(\chi_2)\right|^4}\cdot \mathop{\sum_{\chi(-1)=1}}_{\chi\neq \chi_0}\left(\left|\sum_{a=1}^{p-1}\chi(a)e\left(\frac{a^2}{p}\right)\right|^2-2p    \right)^4\nonumber\\
&=&\frac{1}{p^2}\cdot \sum_{k=0}^4\binom{4}{k}\mathop{\sum_{\chi(-1)=1}}_{\chi\neq \chi_0}\left|\sum_{a=1}^{p-1}\chi(a)e\left(\frac{a^2}{p}\right)\right|^{2k}(-2p)^{4-k}\nonumber\\
&=&3\cdot p^3+ O\left(p^{\frac{5}{2}}\cdot \ln p\right),
\end{eqnarray*}
where $\tau(\chi_2)=\displaystyle \sum_{a=1}^{p-1}\left(\frac{a}{p}\right)e\left(\frac{a}{p}\right)$ denotes the classical Gauss sums.

This proves Theorem 1.6.
 \end{proof}

It is clear that Theorem 1.5 and Theorem 1.7  follows from the estimate
$$
\sum_{a=1}^{p-1}\left|\sum_{\chi\neq \chi_0}\chi(a)\cdot \left|L(1,\chi)\right|\right|\ll p\cdot \ln p,
$$
the reference [10], Lemma 2.3 and the conventional methods.

 This completes the proofs of our all results.
 
%%%%%%%%%%%%%%%%%%%%%%%%%%%%%%%%%%%%%%%%%%%%%%%%%%%%%%%%%%%%%%%%%%%%%%%%%%%%%%%%%%%%%%%%%%%%%%%
\section{Acknowledgement}
During the preparation of this work, N.B. was supported by the postdoctoral fellowship, Harish-Chandra Research Institute, Prayagraj; and A.R. was partially supported by grants MTM2016-75027-P (Ministerio de Economı\'{\i}a y
Competitividad and FEDER) and US-1262169 (Consejería de Econom\'{\i}a, Conocimiento, Empresas y Universidad de la Junta de Andalucía and FEDER).

%%%%%%%%%%%%%%%%%%%%%%%%%%%%%%%%%%%%%%%%%%%%%%%%%%%%%%%%%%%%%%%%%%%%%%%%%%%%%%%%%%%%%%%%%%%%%%%%%%%%%%%%%%%


\begin{thebibliography}{99}

\bibitem{gauss}
T. M. Apostol, {\it Introduction to Analytic Number Theory}, Springer-Verlag, New York, 1976.

\bibitem{BB}
N. Bag, R. Barman {\it Higher Order Moments of Generalized Quadratic Gauss Sums Weighted by $L$-functions}, Asian Journal of Mathematics, accepted for publication.


\bibitem{cochrane}
T. Cochrane and Z. Y. Zheng, {\it Pure and mixed exponential sums}, Acta Arith. 91 (1999), 249-278.

\bibitem{FKMS}
\'{E}. Fouvry, E. Kowalsky, P. Michel, and W. Sawin, {\it Lectures on applied $\ell$-adic cohomology. (English Summary) Analytic methods in arithmetic geometry}, Contemp. Math., Amer. Math. Soc., Providence, RI (2009).

\bibitem{yuan}
Y. He and Q. Liao, {\it On an identity associated with Weil's estimate and its applications}, Journal of Number Theory 129 (2009), 1075-1089.

\bibitem{katz-esde}
N.M. Katz, {\it Exponential Sums and Differential Equations}, Annals of Mathematics Studies 124, Princeton University Press, 1990.

\bibitem{katz-sarnak}
N.M. Katz and P. Sarnak, {\it Random matrices, Frobenius eigenvalues, and monodromy}, Colloquium Publications 45, Amer. Math. Soc., Providence, RI (1999).

\bibitem{RL} A. Rojas-León, {\it Local convolution of $\ell$-adic sheaves on the torus}, Mathematische Zeitschrift, Vol. 274, No. 3 (2013), 1211--1230

\bibitem{weil}
A. Weil, {\it On some exponential sums}, Proc. Nat. Acad. Sci. U.S.A. 34 (1948), 203-210.

\bibitem{zhang}
 W.P. Zhang, {\it Moments of generalized quadratic Gauss sums weighted by $L$-functions},
Journal of Number Theory 92 (2002), 304-314.

\end{thebibliography}
\end{document}